\newtheorem{theorem}[subsection]{Theorem}
\newtheorem{lemma}[subsection]{Lemma}
\newtheorem{corollary}[subsection]{Corollary}
\theoremstyle{definition}
\newtheorem{definition}[subsection]{Definition}
\theoremstyle{remark}
\newtheorem{remark}[subsection]{Remark}
\def\R{\mathbb{R}}
\title{{\bf Newton flows for elliptic functions I}\\
{\small {\bf Structural stability:
Characterization \& Genericity }}}
\author{G.F. Helminck,\\
Korteweg-de Vries Institute\\
University of Amsterdam\\
P.O. Box 94248\\
1090 GE Amsterdam\\
The Netherlands\\
e-mail: g.f.helminck@uva.nl\\ 
F. Twilt,\\
Department of Applied Mathematics\\
University of Twente\\
P.O. Box 217, 7500 AE Enschede\\
The Netherlands\\
e-mail: f.twilt@kpnmail.nl\\
}
\begin{document}
\maketitle 

\begin{abstract}
\noindent
Newton flows are {\it dynamical systems} generated by a continuous, desingularized Newton method for mappings from a Euclidean space to itself. We focus on the special case of meromorphic functions on the complex plane. Inspired by the analogy between the rational (complex) and the elliptic (i.e., doubly periodic meromorphic) functions, a theory on the class of so-called {\it Elliptic Newton flows} is developed.

With respect to an appropriate topology on the set of all elliptic functions $f$ of fixed order $ r (\geqslant 2)$ we prove: For {\it almost all} functions $f$, the corresponding Newton flows are {\it structurally stable} i.e., topologically invariant under small perturbations of the zeros and poles for $f$ [{\it genericity}]. They can be described in terms of nondegeneracy-properties of $f$ similar to the rational case [{\it characterization}].
\end{abstract}

\noindent
{\bf Subject classification:} 
30C15, 30D30, 30F99,  33E05, 34D30, 37C15, 37C20, 37C70, 49M15.\\

\noindent
{\bf Keywords:}  
Dynamical system, 
Newton flow (rational-, elliptic-; desingularized), structural stability, elliptic function (Jacobian, Weierstrass), phase portrait, steady stream.

\section{Meromorphic Newton flows}
\label{sec1}

In this section we briefly explain the concept of meromorphic Newton flow. For details and historical notes, see 
\cite{Bra}, \cite{Gom}, \cite{JJT1}, \cite{JJT2}, \cite{Shub} and \cite{Smale85}.
In the sequel, let $f$ stand for a non-constant meromorphic function on the complex plane. So, $f(z)$ is complex analytic for all $z$ in $\mathbb{C}$ with the possible exception of (countably many) isolated singularities: the {\it poles} for $f$. 

The (damped) Newton method
for finding zeros of $f$ (with starting point $z^{0}$) is given by 
\begin{equation}
\label{vgl1}
z_{n+1}\!-\!z_{n}=-t_{n} \frac{f(z_{n})}{f^{'}(z_{n})}, \;t_{n} \neq 0, \; n=0,1,\hdots,
z_{0}=z^{0}.
\end{equation}
Dividing both sides of (\ref{vgl1}) by the ''damping factor'' $t_{n}$ and choosing $t_{n}$ smaller and smaller, yields an ''infinitesimal version'' 
of (\ref{vgl1}), namely
\begin{equation}
\label{vgl2}
  \dfrac{dz}{dt} = \dfrac{-f (z)}{f^{'} (z)}.
\end{equation}
Conversely, Euler's method applied to (\ref{vgl2}), gives rise to an iteration of the form (\ref{vgl1}).  
A dynamical system of type (\ref{vgl2}) is denoted by $\mathcal{N} (f)$. 
For this system we will interchangeably use the following terminologies: 
{\it vector field} [i.e.\;the expression on its r.h.s.], or {\it (Newton-)flow} [when we focus on its phase portrait(=family of all maximal trajectories as point sets)].

Obviously, zeros and poles for $f$ are removable singularities for $\frac{f}{f'}$ and turn into {\it isolated equilibria} for $\mathcal{N} (f)$. Special attention should be paid to those points $z$ where $f(z)\neq 0$ and $f^{'}(z) = 0$. In these (isolated!) so-called {\it critical points }, the vector field $\mathcal{N} (f)$ is not well-defined. We overcome this complication by introducing an additional ''damping factor'' $(1+|f(z)|^4)^{-1} |f^{'}(z)|^{2}$($ \geqslant 0$)
and considering a system
$\overline{\mathcal{N} }(f)$  of the form
\begin{equation}
\label{vgl3}
\frac{dz}{dt}= -(1+|f(z)|^4)^{-1}  \overline{f^{'} (z)}  f(z).
\end{equation}
Clearly, $\overline{\mathcal{N} }(f)$  may be regarded as another Ôinfinitesimal versionÕ of Newton's iteration (\ref{vgl1}). 
Note that, where both $\mathcal{N}(f)$ and $\overline{\mathcal{N} }(f)$  are well-defined, their phase portraits coincide, including the orientations of the trajectories (cf. Fig. \ref{Figure1}). Moreover, $\overline{\mathcal{N} }(f)$  is a smooth, even real (but not complex) analytic vector field on the {\it whole} plane. For our aims it is enough that $\overline{\mathcal{N} }(f)$ is of class $C^{1}$.
We refer to $\overline{\mathcal{N} }(f)$  as to a {\it desingularized Newton flow} for $f$ on $\mathbb{C}$.

\begin{figure} [htbp]
\begin{center}
\includegraphics*[height=4cm, width=5cm]{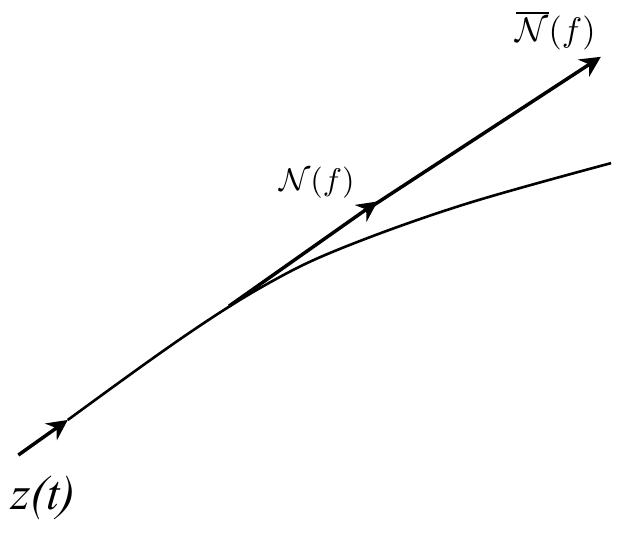}
\caption {$\mathcal{N} (f)$ versus $ \overline{\mathcal{N}} (f)$.} 
\label{Figure1}
\end{center}
\end{figure}

\noindent
Integration of (\ref{vgl2})
yields:
\begin{equation}
  \label{vgl4}
  f(z(t)) = e^{-t} f(z_0)  ,\;  z(0)=z_{0},
\end{equation}
where $z(t)$ denotes the maximal trajectory for $\mathcal{N}(f)$ through a point $z_{0}$. So we have
\begin{equation}
\label{vgl5}
\text{$\mathcal{N}(f)$-trajectories and also those of $\overline{\mathcal{N} }(f)$, are made up of lines arg $f(z) =\text{constant. }$   }
\end{equation}
It is easily verified that these Newton flows fulfil a {\it duality property} which will play an important role in the sequel:
\begin{equation}
\label{vgl6}
  \mathcal{N} (f) = - \mathcal{N} (\dfrac{1}{f}) \text{ and }\overline{\mathcal{N} }(f)= - \overline{\mathcal{N}} (\dfrac{1}{f}).
\end{equation}

\noindent
As a consequence of (\ref{vgl5}), (\ref{vgl6}), and using properties of (multi-)conformal mappings, we picture the local phase portraits of $\mathcal{N}(f)$ and $\overline{\mathcal{N} }(f)$ around their equilibria. See the comment on Fig. \ref{Figure2}, where $N(f)$, $P(f)$ and $C(f)$ stands for respectively the set of zeros, poles and critical points of $f$.

\begin{figure} [htbp]
\begin{center}
\includegraphics*[height=4cm, width=14cm]{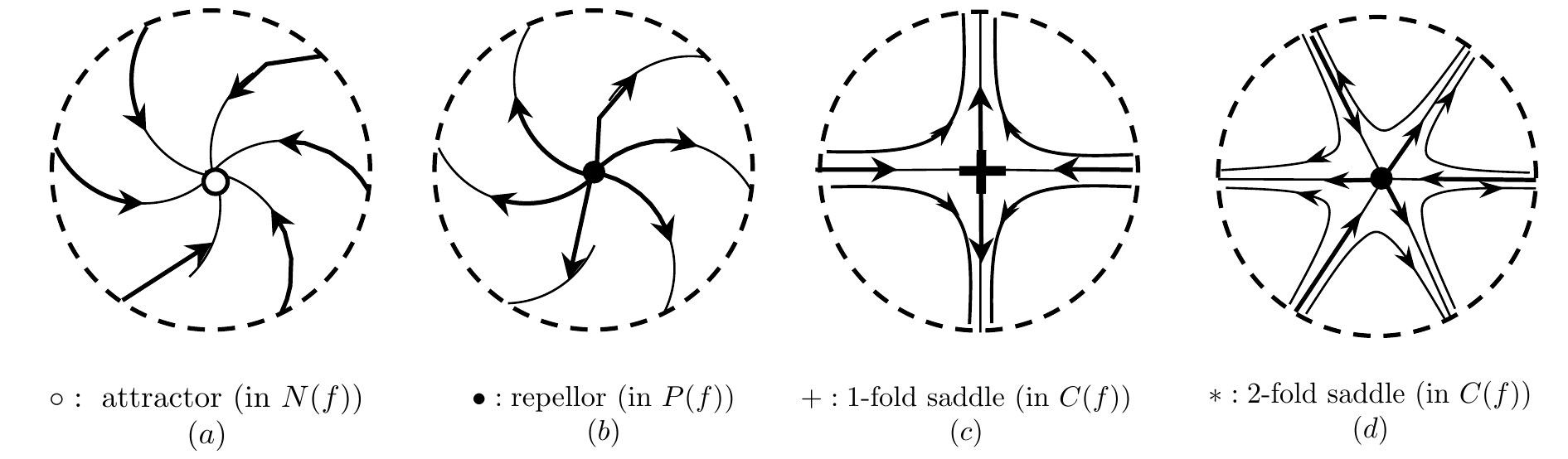}
\caption {Local phase portraits around equilibria of $\overline{\mathcal{N} }(f)$ }
\label{Figure2}
\end{center}
\end{figure}

\noindent
\underline{Comment on Fig. \ref{Figure2}}:

\noindent
Fig. \ref{Figure2}-$(a),(b)$:
In case of a $k$-fold zero (pole) the Newton flow exhibits an attractor (repellor) and  each (principal) value of arg$f$ appears precisely $k$ times on equally distributed incoming (outgoing) trajectories. 
Moreover, as for the (positively measured) angle between two different incoming (outgoing) trajectories, they intersect under a non vanishing angle $\!\frac{\Delta}{k}$, where $\Delta$ stands for the difference of the arg$f$ values on these trajectories, i.e., these equilibria are star nodes. 
In the sequel we will use: If two incoming (outgoing) trajectories at a {\it simple} zero (pole) admit the same arg$f$ value, those trajectories coincide.

\noindent
Fig. \ref{Figure2}-$(c),(d)$: 
In case of a $k$-fold critical point (i.e., a $k$-fold zero for $f'$, no zero for $f$) the Newton flow exhibits a $k$-fold saddle, the stable (unstable) separatrices being equally distributed around this point. The two unstable (stable) separatrices at a 1-fold saddle, see Fig. \ref{Figure2}(c), constitute the ÔlocalÕ unstable (stable) manifold at this saddle point.\\

\begin{figure}
\centering
\includegraphics[width=2.9in]{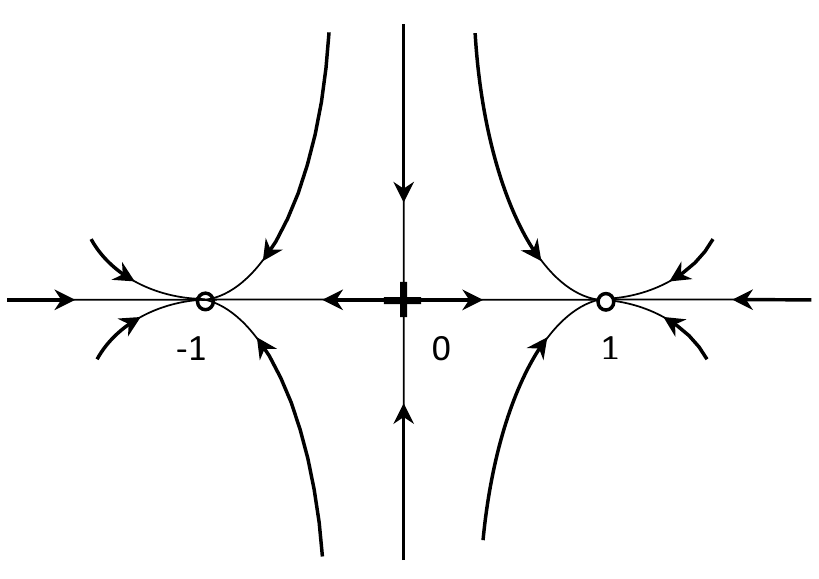}
\caption{ Phaseportrait $\overline{\mathcal{N}}(z^{2}-1)$}
\label{Figure3}
\end{figure} 

\begin{figure}
\centering
\includegraphics[width=2.7in]{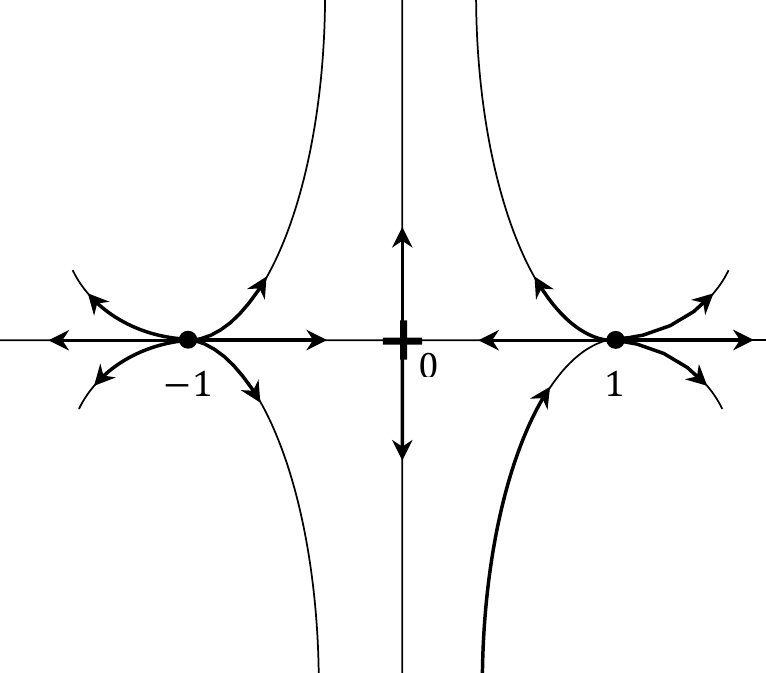}
\caption{ Phaseportrait $\overline{\mathcal{N}}(\frac{1}{z^{2}-1})$}
\label{Figure4}
\end{figure}

In the sequel we shall need:
\begin{remark}
\label{R1.1}
Let $z_{0}$ be either a {\it simple} zero, pole or critical point for $f$. Then $z_{0}$ is a hyperbolic\footnote{An equilibrium for a $C^{1}$- vector field on $\mathbb{R}^{2}$ is called hyperbolic if the Jacobi matrix at this equilibrium has only eigenvalues with non vanishing real parts (cf.\cite{H/S2}).} equilibrium for $\overline{\mathcal{N} }(f)$. (In case of a zero or critical point for $f$, this follows by inspection of the linearization of the r.h.s. of $\overline{\mathcal{N} }(f)$: in case of a pole use (\ref{vgl6}).)
\end{remark}
\begin{remark}
\label{R1.2}
(Desingularized meromorphic Newton flows in $\mathbb{R}^{2}$-setting)
\newline
\noindent
If we put $F: ({\rm Re}(z), {\rm Im}(z))^{T}(=(x_{1}, x_{2})^{T}) \mapsto ({\rm Re}f(z), {\rm Im}f(z))^{T}$, the desingularized Newton flow $\overline{\mathcal{N} }(f)$ takes the form
\begin{align}
\label{vgl7}
\frac{d}{dt}(x_{1}, x_{2})^{T}&
=-[1+|F(x_{1}, x_{2})|^{4}]^{-1} \det (DF(x_{1}, x_{2})) (DF)^{-1}(x_{1}, x_{2})F(x_{1}, x_{2}) \\ \notag
&=-[1+|F(x_{1}, x_{2})|^{4}]^{-1} \tilde{D}F(x_{1}, x_{2}) F(x_{1}, x_{2}),
\end{align}
where $(\cdot)^{T}$ stands for transpose, and $\tilde{D}F(\cdot)$ for the co-factor (adjoint) matrix\footnote{i.e. $\tilde{D}F(x_{1}, x_{2}) \cdot
DF(x_{1}, x_{2})=\det (DF(x_{1}, x_{2}))I_{2}$, where $I_{2}$ stands for the $2 \times 2$-unit matrix.} of the Jacobi matrix $DF(\cdot)$ of $F$. (The r.h.s. of (\ref{vgl7}) vanishes at points corresponding to poles of $f$)
\end{remark}

We end up with some pictures illustrating the above explanation.

\begin{figure}[h!]
\centering
\includegraphics[width=3.1in]{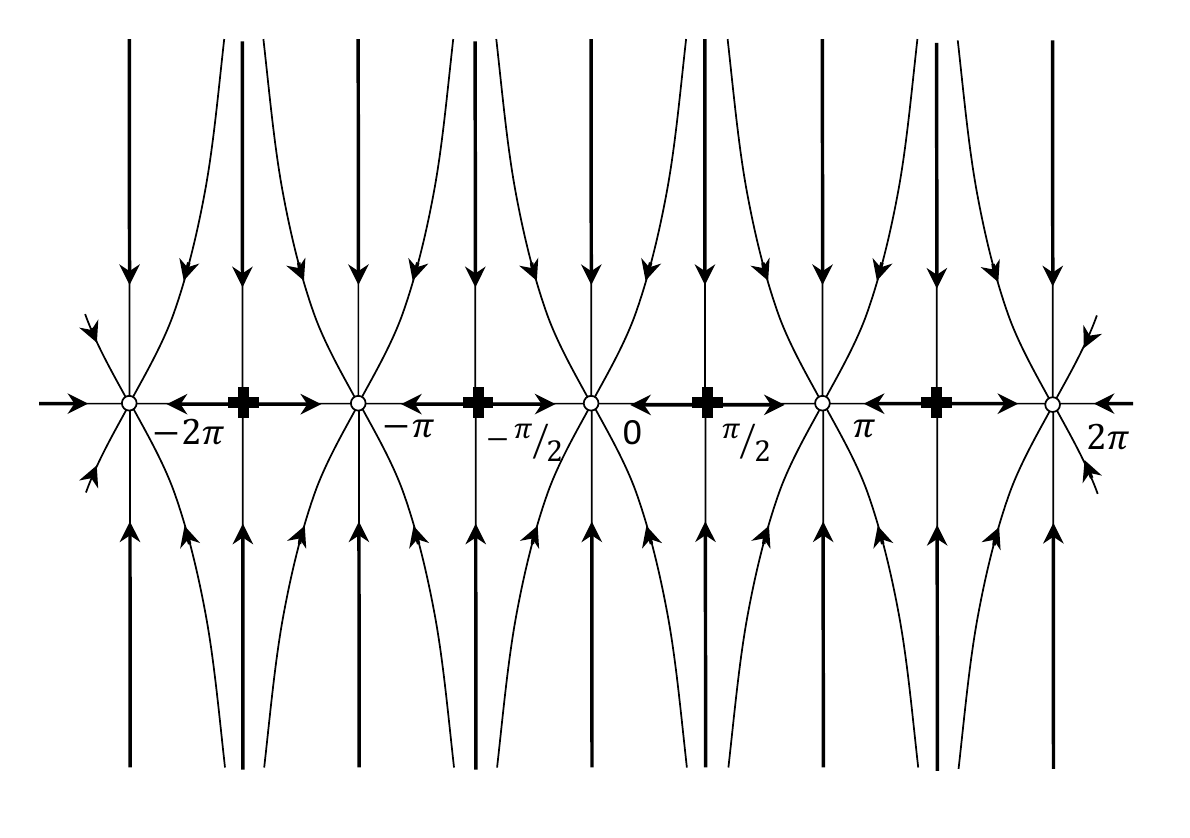}
\caption{ Phaseportrait $\overline{\mathcal{N}}(\sin z)$}
\label{Figure5}
\end{figure} 

\newpage
\begin{figure}[h!]
\centering
\includegraphics[width=3in]{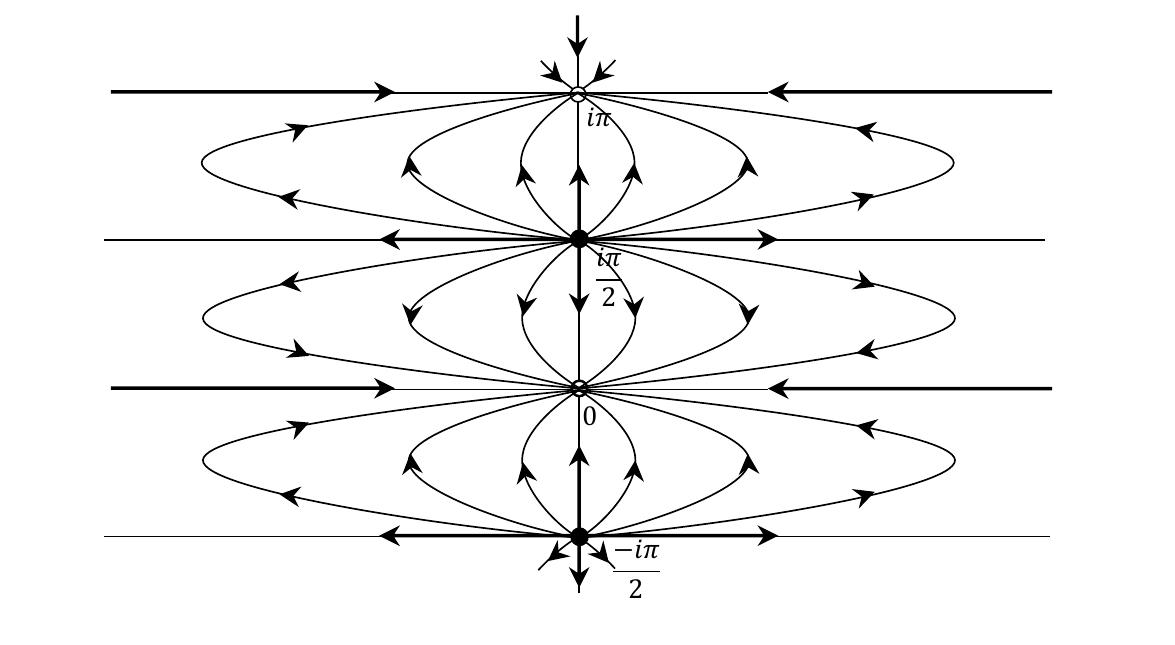}
\caption{ Phaseportrait of $\overline{\mathcal{N}}({\rm tanh} z)$}
\label{Figure6}
\end{figure}

\section{Rational Newton flows; the purpose of the paper}
\label{sec2}

Here we present some earlier results on meromorphic Newton flows in the special case of rational functions. 
Throughout this section, let $f$ be a ({\it non-constant}) rational function. By means of the transformation $w\!=\!\frac{1}{z}$ we may regard $f$ as a function on the extended complex plane 
$\mathbb{C} \cup \{ z=\infty\}$. As usual, we identify the latter set with the sphere $S^{2}$ (
as a Riemann surface) and the set $\mathcal{R}$ of extended functions $f$ with the set of all (non-constant) meromorphic functions on $S^{2}$.
The transformation $w\!\!=\!\!\frac{1}{z}$ turns the Ô{\it planar rational Newton flow}Õ $\overline{\mathcal{N} }(f)$, $f \in \mathcal{R}$, into a {\it smooth} vector field on $S^{2}$, denoted  $\overline{\overline{\mathcal{N}} }(f)$, cf. \cite{JJT3}, \cite{Twi}.
In the theory on such vector fields the concept of structural stability plays an important role, see e.g. \cite{Peix1} or \cite{H/S2}.
Roughly speaking, structural stability of $\overline{\overline{\mathcal{N}} }(f)$ means ''topological invariance of its phase portrait under sufficiently small perturbations of the problem data''. Here we briefly summarize the results as obtained by Jongen et al., Shub (cf. \cite{JJT2}, \cite{JJT3}, \cite{JJT4}, \cite{Shub}):
\begin{theorem} $($Structural stability for rational Newton flows$)$\\
\label{thmS2} 
Let $f \in \mathcal{R}$, then:
\begin{enumerate}
\item[(i)] Characterization:\\	
The flow $\overline{\overline{\mathcal{N}} }(f)$, $f \in \mathcal{R}$,  is structurally stable iff
$f$ fulfils the following conditions:
\begin{itemize}
\item All finite zeros and poles for $f$ are simple. 
\item  All critical points for $f$, possibly including $z=\infty$, are simple.
\item  No two critical points for $f$ are connected by an $\overline{\overline{\mathcal{N}} }(f)$-trajectory.
\end{itemize}
\item[(ii)] Genericity:\\
For ``almost all'' functions $f$ in $\mathcal{R}$, the flows $\overline{\overline{\mathcal{N}} }(f)$ are structurally stable, i.e. the functions $f$ 
as in $($i$)$ constitute an open and dense subset of $\mathcal{R}$ $($w.r.t. an appropriate topology on $\mathcal{R})$. 
\item[(iii)] Classification:\\
 The conjugacy classes of the stucturally stable flows $\overline{\overline{\mathcal{N}} }(f)$ can be classified in terms of 
 certain sphere graphs that are generated by the phase portraits of these flows.
\item[(iv)] Representation:\\
Up to conjugacy for flows and $($topological$)$ equivalency for graphs, there is a 
1-1- correspondence between the set of all structurally stable flows $\overline{\overline{\mathcal{N}} }(f)$ and the set of
 all so-called Newton graphs, i.e., cellularly embedded sphere graphs that fulfil some
 combinatorial $($Hall $)$ condition. 
\end{enumerate}
\end{theorem}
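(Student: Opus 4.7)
The plan is to reduce the theorem to Peixoto's classical characterization of $C^{1}$-structural stability for vector fields on compact $2$-manifolds, applied to $\overline{\overline{\mathcal{N}}}(f)$ on $S^{2}$. Peixoto's conditions are: (a) every equilibrium is hyperbolic; (b) every closed orbit is hyperbolic; (c) no trajectory connects two saddles; (d) every $\omega$- and $\alpha$-limit set is an equilibrium or closed orbit. The strategy across parts (i)--(iv) is to translate each of these into an analytic or combinatorial condition on $f \in \mathcal{R}$.

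For (i), I would first invoke Remark \ref{R1.1} to obtain hyperbolicity of all finite equilibria precisely when the associated zero, pole or critical point is simple, and then handle $z=\infty$ via the chart $w=1/z$ (using the duality (\ref{vgl6}) to reduce the pole case to a zero). Condition (b) is vacuous: integration (\ref{vgl4}) gives $|f(z(t))| = e^{-t}|f(z_{0})|$, so $|f|$ is a strict Lyapunov function for $\mathcal{N}(f)$; since $\overline{\mathcal{N}}(f)$ differs from $\mathcal{N}(f)$ only by a non-negative time reparametrization away from critical points, no closed orbit can appear. With (b) ruled out, Poincaré--Bendixson together with (\ref{vgl5}) forces every $\omega$- and $\alpha$-limit to be an equilibrium, giving (d). Condition (c) is literally the third bullet of (i).

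For (ii), openness of the stable set is standard: hyperbolicity of equilibria and absence of saddle connections are $C^{1}$-open, and the assignment $f \mapsto \overline{\overline{\mathcal{N}}}(f)$ is continuous when $\mathcal{R}$ carries the topology induced by the coefficients of a numerator/denominator representation. Density splits in two. Simplicity of all zeros, poles and critical points (including at $\infty$) is open-dense by a standard transversality argument on these coefficients. Breaking saddle connections is the more delicate step: given a trajectory joining two critical points, I would construct an explicit small perturbation of $f$ within $\mathcal{R}$ that shifts the unstable separatrix of one saddle transversally off the stable separatrix of the other, using continuous dependence of separatrices on parameters; the finiteness of the critical set (bounded by $\deg f$) keeps the number of potential connections finite, so they can be eliminated one by one without spoiling the already achieved simplicity conditions.

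For (iii) and (iv), I would attach to each structurally stable flow its \emph{Newton graph}: the cellularly embedded $1$-complex on $S^{2}$ whose vertices are the zeros of $f$ (with $\infty$ included if it is one), whose edges are the closures of the unstable separatrices emanating from the simple saddles, and whose $2$-cells each contain exactly one pole (by the duality (\ref{vgl6})). Structural stability ensures the graph is well-defined and cellular, and topological conjugacy of flows matches equivalence of embedded graphs, yielding (iii). The Hall-type condition for (iv) encodes the existence of a system of distinct representatives pairing saddles with adjacent zeros; its necessity is direct from the flow geometry. The main obstacle, and the deepest step of the whole theorem, is \emph{sufficiency}: given an abstract cellularly embedded sphere graph satisfying Hall's condition, one must realize it as the Newton graph of some $f \in \mathcal{R}$. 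I would attempt this by reconstructing $f$ up to Möbius transformation from the combinatorial data, building the level curves of arg $f$ cell by cell and invoking a uniformization (or measurable Riemann mapping) argument to produce the meromorphic function.
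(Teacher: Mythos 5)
This theorem is not proved in the paper at all: it is stated in Section~\ref{sec2} purely as a summary of known results, with the proofs delegated to \cite{JJT2}, \cite{JJT3}, \cite{JJT4} and \cite{Shub}. So there is no internal proof to match your proposal against; the closest point of comparison is the paper's own proof of the elliptic analogue, Theorem~\ref{T5.6}, which does follow your overall Peixoto-based strategy for the characterization and genericity parts. Measured against that, your sketch has two genuine gaps.

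First, in (i) you only argue the ``if'' direction (nondegeneracy implies the Peixoto conditions, hence structural stability). You never address the ``only if'' direction: why a flow with, say, a double zero or a saddle connection cannot be structurally stable. The paper's argument for the elliptic case handles this by contradiction using the already-established density: if $f$ were degenerate but $\overline{\overline{\mathcal{N}}}(f)$ structurally stable, a nearby nondegenerate $g$ would have a conjugate flow, which is impossible since conjugacy preserves the number of hyperbolic attractors/repellors and the absence of saddle connections. Some such argument is needed; the claim is not ``literally the third bullet.''

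Second, your mechanism for breaking saddle connections --- perturbing $f$ so as to ``shift the unstable separatrix transversally off the stable separatrix'' --- is the wrong tool and is not justified. $\mathcal{R}$ is a finite-dimensional family, and one cannot freely steer individual separatrices by perturbing coefficients; continuous dependence of separatrices gives no control of the sign or transversality of the displacement. The argument that actually works (used in the density part of Theorem~\ref{T5.6} and in \cite{JJT3}) exploits (\ref{vgl5}): trajectories are level lines of $\arg f$, so a trajectory joining two critical points forces the corresponding critical values to lie on a common ray through $0$, equivalently the segment joining them passes through $0$ after suitable normalization. Adding a small constant $c$ to $f$ leaves the critical points fixed, keeps zeros and poles simple, and moves the critical values so that no line through two of them contains $0$; this kills all saddle connections at once. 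Your treatment of (iii) and (iv) is a reasonable description of the Newton-graph correspondence, but, as you acknowledge, the realization of an abstract Hall graph by a rational function is the substance of \cite{JJT4} and is not something a uniformization one-liner delivers.
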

\noindent
The purpose of the present paper and its sequel \cite{HT2} is to find out wether similar results hold for {\it elliptic} Newton flows (i.e., meromorphic Newton flows in the case of elliptic functions), be it that, especially in the cases (iii) Classification and
(iv) Representation, the proofs are much harder, see \cite{HT2}. In the present paper we focus on the first two properties mentioned in Theorem \ref{thmS2}: characterization and genericity. 

Phase portraits of rational Newton flows (even structurally stable) on $\mathbb{C}$ are presented in Fig. \ref{Figure3} and \ref{Figure4}. The simplest example of a spherical rational Newton flow is the so-called North-South flow, given by $\overline{\overline{\mathcal{N}} }(z^{n}), n \geqslant 1$, see Fig. \ref{Figure7}; structurally stable if
$n=1$. Intuitively, it is clear
that the phase portraits of $\overline{\overline{\mathcal{N}} }(z^{n})$ and $\overline{\overline{\mathcal{N}} }((\frac{z-a}{z-b})^{n}), a\neq b, $ are topologically equivalent (i.e., equal up to conjugacy), see Fig.\ref{Figure7} and \ref{Figure8}.

One of the first applications of Newton flows was Branin's method for solving non linear problems, 
see \cite{Bra}, \cite{Gom} and \cite{JJT2}. It was Smale, see \cite{Smale85}, who stressed the importance for complexity theory of classifying Newton graphs on the sphere
that determine the desingularized rational Newton flows.
This was done for a class of polynomials in \cite{Shub} and in general in \cite{JJT4}. Also in the elliptic case, the classification of so-called elliptic Newton graphs on the torus,
which determine the desingularized elliptic Newton flows, has implications for complexity theory, see \cite{HT2}.

\begin{figure}[h!]
\centering
\includegraphics [width=3.4in]{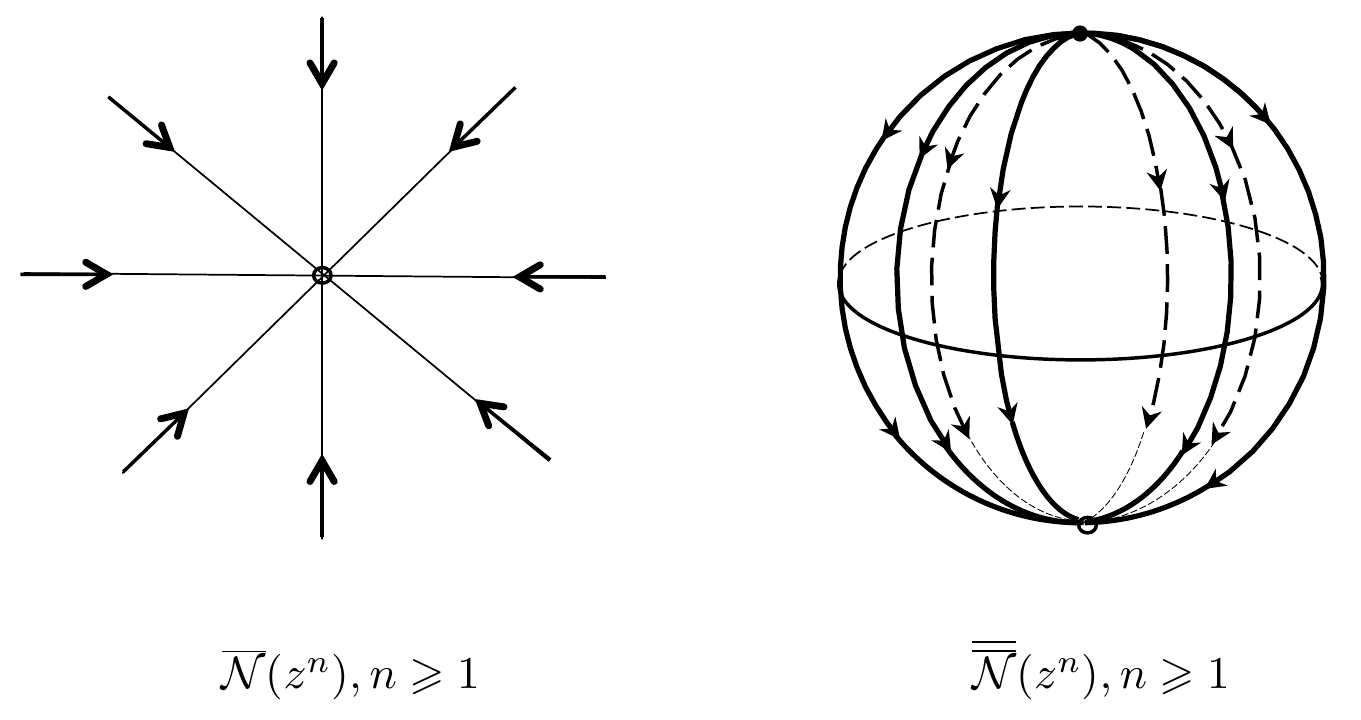}
\caption{ The planar and spherical North-South flow}
\label{Figure7}
\end{figure} 
\begin{figure}[h!]
\centering
\includegraphics[width=1.6in]{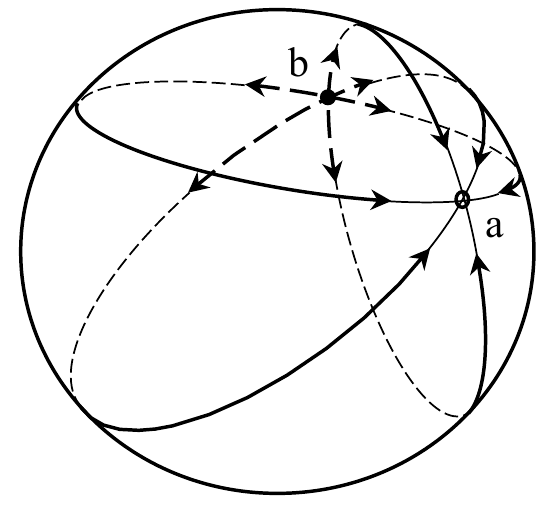}
\caption{ $\overline{\overline{\mathcal{N}}}((\frac{z-a}{z-b})^{n})$} 
\label{Figure8}
\end{figure}

\section{Elliptic Newton flows: definition}
\label{sec3}
Throughout this section, let $f$ be a (non-constant) elliptic, i.e.\;a meromorphic and doubly periodic function of order $r$ $(2 \leqslant r < \infty)$ with $(\omega_{1},\; \omega_{2})$ as a pair of {\it basic periods.}\footnote{i.e. each period is of the form $n \omega_{1}+m\omega_{2}, n, m \in \mathbb{Z}.$ In particular, ${\rm Im}\frac{\omega_{2}}{\omega_{1}} \neq 0$ (cf. \cite{M2}, \cite{Cha}).} We always assume that ${\rm Im}\frac{\omega_{2}}{\omega_{1}} >0.$ The associated period lattice is denoted by $\Lambda$, and $P_{\omega_{1}, \; \omega_{2}}$ stands for the ''half open/half closed'' period parallelogram $\{t_{1}\omega_{1} + t_{2}\omega_{2} \mid 0 \leqslant t_{1}<1, \;0 \leqslant t_{2}<1\}$.
On $P_{\omega_{1},\omega_{2}}$, the function $f $ has $r$ zeros and $r $ poles (counted by multiplicity).

By Liouville's Theorem, these sets of zeros and poles determine $f$ up to a multiplicative constant $C(\neq 0)$,
and thus also the class $[f]$ of all elliptic functions of the form $Cf , C(\neq 0).$

Let $T_{\omega_{1},\omega_{2}}$ be the torus obtained from $P_{\omega_{1},\omega_{2}}$ by identifying opposite sides in the boundary of this parallelogram. The planar, desingularized Newton flow $\overline{\mathcal{N} }(f)$ is doubly periodic on $\mathbb{C}$ with periods $(\omega_{1}, \omega_{2})$. Hence, this flow may be interpreted 
as a $C^{1}$-(even smooth, but nowhere meromorphic) vector field, say 
$\overline{\overline{\mathcal{N}}}_{\omega_{1},\omega_{2}} (f)$,
on $T_{\omega_{1},\omega_{2}}$; its trajectories correspond to the lines
${\rm arg} f(z)=$ constant, cf. (\ref{vgl5}).  
We refer to 
$\overline{\overline{\mathcal{N}}}_{\omega_{1},\omega_{2}} (f)$
as the {\it (desingularized) elliptic Newton flow} for $f$ on $T_{\omega_{1},\omega_{2}}$.

If $g$ is another function in $[f]$, the planar flows $\overline{\mathcal{N} }(g)$ and $\overline{\mathcal{N} }(f)$ have equal phase portraits,  as follows by inspection of the expressions of these flows in Section 1; see also Fig.\ref{Figure1}. Hence, the flows 
$\overline{\overline{\mathcal{N}}}_{\omega_{1},\omega_{2}} (f)$
and 
$\overline{\overline{\mathcal{N}}}_{\omega_{1},\omega_{2}} (g)$
, both defined on $T_{\omega_{1},\omega_{2}}$, have equal phase portraits.

Next, we choose another pair of basic periods for $f$, say $(\omega'_{1}, \omega'_{2})$, with ${\rm Im}\frac{\omega'_{2}}{\omega'_{1}} >0$, i.e. $(\omega_{1}, \omega_{2})$ and $(\omega'_{1}, \omega'_{2})$ generate the same lattice $\Lambda$ and are related by a unimodular\footnote{$M$ is given by a $2 \times 2$-matrix with coefficients in $\mathbb{Z}$ and determinant +1 (cf. \cite{Cha}).} linear transfomation $M$. 
The function $f$ has on $P_{\omega_{1},\omega_{2}}$ and $P_{\omega^{'}_{1},\omega^{'}_{2}}$ the same zeros/poles (up to congruency mod $\Lambda$).

\noindent
The above introduction of the concept ``elliptic Newton flow for $f $'' leads to different flows (being defined on different tori $T_{\omega_{1},\omega_{2}}$ and $T_{\omega^{'}_{1},\omega^{'}_{2}}$). 
 In order to see the effect of $M$ on the planar flow $\overline{\mathcal{N} }(f)$ we turn over to its $\mathbb{R}^{2}$-setting (cf. Remark 1.2) and apply the chain rule. We put $M(x)=y, x=(x_{1}, x_{2})^{T}, y=(y_{1}, y_{2})^{T}$ in $\mathbb{R}^{2}$ and find:
 \begin{equation}
 \label{verg8}
\frac{dy}{dt}=-[1 +|F(y)|^{4}]^{-1} 
\det(M) \tilde{D}F(y)F(y).
\end{equation}
Hence, because $\det(M)=1$, the phase portrait of $\overline{\mathcal{N} }(f)$ is not changed.
Since $M$ induces a homeomorphism between $T_{\omega_{1},\omega_{2}}$ and $T_{\omega^{'}_{1},\omega^{'}_{2}}$, we find on the level of the toroidal flows:
\begin{lemma}
\label{L3.1}
Let $(\omega_{1}, \omega_{2})$ and $(\omega^{'}_{1}, \omega^{'}_{2})$  be pairs of basic periods for $f$.
Then, the unimodular mapping from $(\omega_{1}, \omega_{2})$ to $(\omega^{'}_{1}, \omega^{'}_{2})$ induces a homeomorphism from $T_{\omega_{1},\omega_{2}}$ to $T_{\omega^{'}_{1},\omega^{'}_{2}}$ such that the phase portraits of 
$\overline{\overline{\mathcal{N}}}_{\omega_{1},\omega_{2}} (f)$
and 
$\overline{\overline{\mathcal{N}}}_{\omega^{'}_{1},\omega^{'}_{2}} (f)$
correspond under this homeomorphism, thereby respecting the orientations of the trajectories.
\end{lemma}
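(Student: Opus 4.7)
The proof strategy is almost dictated by the computation the authors have already recorded in equation (\ref{verg8}): all that remains is to package it properly and verify the descent to the toroidal quotients.

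First I would set the stage in the $\R^{2}$-picture of Remark \ref{R1.2}. Since $(\omega_{1},\omega_{2})$ and $(\omega'_{1},\omega'_{2})$ are both bases of the same lattice $\Lambda$, the transition matrix $M$ lies in $\GL_{2}(\Z)$; the extra sign condition ${\rm Im}(\omega_{2}/\omega_{1})>0$ together with ${\rm Im}(\omega'_{2}/\omega'_{1})>0$ forces $\det(M)=+1$, i.e.\ $M\in \SL_{2}(\Z)$. Viewed as a linear automorphism of $\R^{2}$, $M$ carries $\Lambda$ onto itself, hence it is $\Lambda$-equivariant and descends to a well-defined homeomorphism $\widehat{M}\colon T_{\omega_{1},\omega_{2}}\to T_{\omega'_{1},\omega'_{2}}$.

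Next I would verify that $M$ maps the planar phase portrait of $\overline{\mathcal{N}}(f)$ to itself, orientations included. Writing the vector field in the form (\ref{vgl7}) and substituting $y=Mx$, the chain rule gives exactly (\ref{verg8}). Because $\det(M)=+1$, the scalar prefactor is unchanged (in particular stays positive), and the cofactor matrix $\tilde{D}F$ transforms compatibly, so the right-hand side in the $y$-coordinates has the very same form (\ref{vgl7}) as in the $x$-coordinates. Consequently $M$ sends trajectories to trajectories, parametrized by the same time variable $t$; in particular the orientations are preserved.

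The toroidal statement is now immediate by descent. Since $\overline{\mathcal{N}}(f)$ is $\Lambda$-periodic on $\C$, its phase portrait passes to a phase portrait on each quotient torus; the first and second paragraphs together say that the planar phase portrait is $M$-invariant, so after passing to the quotients the induced homeomorphism $\widehat{M}$ intertwines the phase portraits of $\overline{\overline{\mathcal{N}}}_{\omega_{1},\omega_{2}}(f)$ and $\overline{\overline{\mathcal{N}}}_{\omega'_{1},\omega'_{2}}(f)$ with matching orientations.

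The only real subtlety—and the one place where one must be careful rather than mechanical—is the sign of $\det(M)$. If $M$ had determinant $-1$, the prefactor in (\ref{verg8}) would flip sign and all trajectories would be traversed in reverse, destroying the orientation-preserving claim. The convention ${\rm Im}(\omega_{2}/\omega_{1})>0$ is therefore not cosmetic: it is precisely what pins $M$ down to $\SL_{2}(\Z)$ and is the main (and essentially only) obstacle one must attend to.
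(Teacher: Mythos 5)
Your proposal follows the same route as the paper: the chain-rule computation yielding (\ref{verg8}), the observation that $\det(M)=+1$ (forced by the convention ${\rm Im}(\omega_{2}/\omega_{1})>0$) so the planar phase portrait and orientations are preserved, and the descent to the induced homeomorphism of tori. It is correct and matches the paper's argument, with your remarks on $\Lambda$-equivariance and the sign of $\det(M)$ merely making explicit what the paper leaves implicit.
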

So, from a topological point of view, the flows $\overline{\overline{\mathcal{N}}}_{\omega_{1},\omega_{2}} (f)$
and 
$\overline{\overline{\mathcal{N}}}_{\omega^{'}_{1},\omega^{'}_{2}} (f)$ may be considered as equal. We emphasize that the linear map $M$ does not act on the meromorphic object $\mathcal{N}(f)$, but on the phase portraits of its desingularized toroidal $C^{1}$-versions.
The above lemma, together with the preceding observation, leads to:
\begin{definition}
\label{D3.2}
If $f$ is an elliptic function of order $r$, then: 
\begin{enumerate}
\item 
The elliptic Newton flow for $f$ , denoted $\overline{\overline{\mathcal{N}}} ([f])$, is the collection of all flows 
$\overline{\overline{\mathcal{N}}}_{\omega_{1},\omega_{2}} (g)$
, for any $g \in [f]$ and any pair $(\omega_{1}, \omega_{2})$ generating the  period lattice $\Lambda$ of $f$.
\item
The set of all elliptic Newton flows of order $r$ with respect to a given period lattice $\Lambda$ is denoted $N_{r}(\Lambda)$.
\end{enumerate}
\end{definition}

\noindent
This definition 
might look rather complicated. However, a natural interpretation is possible. To see this, let us consider the quotient space $T(\Lambda)\!\!=\!\!\mathbb{C}/ \Lambda$, endowed with the complex analytic structure\footnote{As coordinate neighborhoods in $T(\Lambda)$ take open subsets of $\mathbb{C}$ that contain no points congruent to one another mod $\Lambda$.}  determined by the pair $(\omega_{1}, \omega_{2})$. A pair $(\omega^{'}_{1}, \omega^{'}_{2})$, related to $(\omega_{1}, \omega_{2})$ by a unimodular map, generates the same lattice $\Lambda$ and determines on $T(\Lambda)$ another -but isomorphic- complex analytic structure (cf. \cite{Gun}). Each parallelogram 
$P_{\omega_{1},\omega_{2}}$ resp. $P_{\omega^{'}_{1},\omega^{'}_{2}}$  contains precisely one representative for each of the classes $\mathrm {mod} \, \Lambda$. Hence, the tori $T_{\omega_{1},\omega_{2}}$ and $T_{\omega^{'}_{1},\omega^{'}_{2}}$ may be identified with $T(\Lambda)$, endowed with 
isomorphic complex analytic 
structures. Now, the  flows 
$\overline{\overline{\mathcal{N}}}_{\omega_{1},\omega_{2}} (f)$ and $\overline{\overline{\mathcal{N}}}_{\omega^{'}_{1},\omega^{'}_{2}} (f)$
can be interpreted
as  smooth flows on $T(\Lambda)$ with {\it the same phase portraits}.
Regarding flows on $T(\Lambda)$ with the same phase portraits as equal, compare the ``desingularization'' step leading from (\ref{vgl2}) to  (\ref{vgl3}) and see also Fig. \ref{Figure1}, we may interprete the elliptic Newton flow $\overline{\overline{\mathcal{N}}} ([f])$ as a smooth vector field on the compact torus $T(\Lambda)$.
Consequently, it is allowed to apply the theory for $C^{1}$-vector fields on compact two-dimensional differential manifolds. For example: Since 
there are no closed orbits by (\ref{vgl4}), and applying the Poincar\'e-Bendixson-Schwartz Theorem, cf.  \cite{JJT1}, \cite{JJT2}, \cite{Peix1}, we find: 
\begin{lemma}
\label{L3.3}
The limiting set of any (maximal) trajectory of $\overline{\overline{\mathcal{N}}} ([f])$ tends -for increasing $t$- to either a zero or a critical point for $f$ on $T(\Lambda)$, and -for decreasing $t$- to either a pole or a critical point for $f$ on 
$T(\Lambda)$. 
\end{lemma}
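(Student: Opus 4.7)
The plan is to exploit the explicit monotonicity of $|f|$ along trajectories of $\overline{\mathcal{N}}(f)$, combined with the Poincaré–Bendixson–Schwartz machinery on the compact manifold $T(\Lambda)$. First I would set up the framework: by the discussion preceding the lemma, $\overline{\overline{\mathcal{N}}}([f])$ is a $C^{1}$-vector field on the compact 2-manifold $T(\Lambda)$, whose equilibria are precisely the zeros, poles and critical points of $f$ modulo $\Lambda$ (a finite set, by ellipticity). Let $z(t)$ be a maximal trajectory that is not an equilibrium.

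Next I would establish strict monotonicity of $|f|$. Away from the equilibria, the vector field has the form $\dot z = -(1+|f|^{4})^{-1}\overline{f'(z)}f(z)$ from \eqref{vgl3}, and a direct computation gives
\begin{equation*}
\frac{d}{dt}|f(z(t))|^{2} \;=\; 2\,\mathrm{Re}\!\left(\overline{f(z(t))}\,f'(z(t))\,\dot z(t)\right) \;=\; -\,\frac{2\,|f'(z(t))|^{2}\,|f(z(t))|^{2}}{1+|f(z(t))|^{4}},
\end{equation*}
which is strictly negative at every non-equilibrium point of the trajectory. Hence $|f|$ is strictly decreasing along non-trivial trajectories (in accord with $f(z(t))=e^{-t}f(z_{0})$ in \eqref{vgl4} for the undamped flow). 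In particular, $|f(z(t))|$ tends monotonically to a limit $L\in[0,\infty)$ as $t\to+\infty$ (finite, since the trajectory stays in the compact torus where $|f|$ is bounded away from $\infty$ except at the isolated poles, which cannot be reached in finite or asymptotic time along a decreasing-$|f|$ orbit).

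Now I would invoke Poincaré–Bendixson–Schwartz: since the torus is compact and the $\omega$-limit set $\omega(z(\cdot))$ is nonempty, compact, connected and invariant, it must be either an equilibrium, a closed orbit, or a union of equilibria and connecting trajectories. Strict monotonicity of $|f|$ rules out closed orbits. By invariance of $\omega(z(\cdot))$ and continuity of $|f|$, the value of $|f|$ on $\omega(z(\cdot))$ is constant, equal to $L$; but along any non-equilibrium orbit segment contained in $\omega(z(\cdot))$, $|f|$ would have to vary strictly, contradiction. So $\omega(z(\cdot))$ consists solely of equilibria; being a connected subset of the finite (hence discrete) equilibrium set, it is a single equilibrium $z^{*}$. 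If $L=0$ then $z^{*}$ is a zero of $f$; if $L>0$ then $z^{*}$ cannot be a pole (where $|f|=\infty$) and cannot be a zero, so $z^{*}$ is a critical point. The statement for $t\to-\infty$ follows by applying the same argument to the time-reversed flow, which by the duality \eqref{vgl6} is $\overline{\overline{\mathcal{N}}}([1/f])$; zeros and poles swap roles, yielding the claim.

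The only mildly delicate point, and what I view as the main obstacle, is the appeal to Poincaré–Bendixson on the torus, where in general recurrent non-periodic motion can occur; the monotone Lyapunov function $|f|$ is precisely what excludes this here, so the argument is clean provided one verifies carefully that $\omega$-limits on a compact surface admitting such a strict Lyapunov function reduce to single equilibria (a standard consequence once recurrence and closed orbits are excluded).
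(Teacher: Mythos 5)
Your argument is correct and follows essentially the same route as the paper, which derives the lemma from the monotonicity encoded in $f(z(t))=e^{-t}f(z_0)$ (excluding closed orbits and non-trivial recurrence) together with the Poincar\'e--Bendixson--Schwartz theorem on the compact torus. Your write-up merely makes explicit the Lyapunov-function details (constancy of $|f|$ on the invariant $\omega$-limit set, finiteness of the equilibrium set, and the use of duality for $t\to-\infty$) that the paper leaves to the cited references.
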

We also have:
\begin{remark}
\label{R3.4}
Hyperbolic equilibria for $\overline{\mathcal{N}} (f)$ correspond to such equilibria for $\overline{\overline{\mathcal{N}}} ([f])$.
\end{remark}

\section{Elliptic Newton flows: representation}
\label{sec4}

Let $f$ be 
as 
in Section \ref{sec3}, i.e., an elliptic function of order $r$ $(2 \leqslant r < \infty)$ with $(\omega_{1}, \omega_{2})$, ${\rm Im}\frac{\omega_{2}}{\omega_{1}} >0$, an (arbitrary) pair of basic periods generating a period lattice $\Lambda$.  The set of all such functions is denoted by $E_{r}(\Lambda)$.

Let 
the zeros and  poles for $f$ on $P_{\omega_{1},\omega_{2}}$ be $a_{1}, \! \cdots \!\!,a_{r} $, resp. $b_{1}, \! \cdots \! ,b_{r} $ (counted by multiplicity). Then we have: (cf. \cite{M2})
\begin{equation}
\label{vgl9}
a_{i} \neq b_{j},i, j=1, \! \cdots \! \!,r \text{ and }a_{1}+  \cdots +a_{r} =b_{1} + \cdots  +b_{r} \text{ mod }\Lambda.
\end{equation}
We may consider $f$ as a meromorphic function on the  quotient space $T(\Lambda)\!\!=\!\!\mathbb{C}/ \Lambda$.
The zeros and poles for $f$ on $T(\Lambda)$ are given by respectively: 
$[a_{1}], \! \cdots \!, [a_{r}] $ and  $[b_{1}], \! \cdots \!, [b_{r}] $, where $[\cdot]$ stands for the congruency class $\mathrm {mod} \, \Lambda$
of a number in $\mathbb{C}$. Apparently, from (\ref{vgl9}) it follows:
\begin{equation}
\label{vgl10}
[a_{i}] \neq [b_{j}],i, j=1, \! \cdots \! ,r \text{ and }[a_{1}]+ \! \cdots \! +[a_{r}] =[b_{1}] +\! \cdots \! +[b_{r}].
\end{equation}
Moreover, a parallelogram of the type $P_{\omega_{1},\omega_{2}}$ contains one representative of each of these classes: the $r$ zeros/poles for $f$ on this parallelogram.

An elliptic Newton flow $\overline{\overline{\mathcal{N}}} ([f])(\in N_{r}(\Lambda))$ corresponds uniquely to the class $[f]$. So we may identify the set $N_{r}(\Lambda)$ with the set  $\{[f ] | f \in E_{r}(\Lambda)\}$.

On its turn, the class $[f]$ is uniquely determined (cf. Section \ref{sec3}) by sets of zeros/poles, say $\{ a_{1}, \! \cdots \! ,a_{r} \}$/ $\{ b_{1}, \! \cdots \! ,b_{r} \}$ 
, both situated in some $P_{\omega_{1},\omega_{2}}$. Thus, the  sets 
$$
\{[a_{1}], \! \cdots \!, [a_{r}] \},  \{[b_{1}], \! \cdots \! , [b_{r}]\} 
$$ 
fulfill the conditions (\ref{vgl10}). Conversely, we have:
\newline
Let two sets $\{[a_{1}], \! \cdots \!, [a_{r}] \}, \; \{[b_{1}], \! \cdots \!
, [b_{r}]\}$ of classes 
$\mathrm {mod} \, \Lambda$
(repetitions permitted!), fulfilling conditions (\ref{vgl10}), be given.  
Assume that the representatives, $a_{1}, \! \cdots \! ,a_{r}$ and $b_{1}, \! \cdots \! ,b_{r}$, of these classes are situated in a half open/half closed parallelogram spanned by an (arbitrary) pair of basic periods of $\Lambda$. We put 
$$
b^{'}_{r} = a_{1}+ \! \cdots \! +a_{r} - b_{1}- \! \cdots \! -b_{r-1}, \text{ thus }[b^{'}_{r}]=[b_{r}].
$$
Consider functions of the form
\begin{equation}
\label{vgl11}
C \frac{\sigma(z - a_1)\! \cdots \! \sigma (z - a_r)} {\sigma(z - b_1) \! \cdots \! \sigma(z - b_{r-1})  \sigma (z - b^{'}_r)},
\end{equation}
where $C(\neq 0)$ is an arbitrary constant, and $\sigma$ stands for 
Weierstrass' sigma function corresponding to $\Lambda$ (cf. \cite{M2}).
Since $\sigma$ is a holomorphic, quasi-periodic
function with only simple zeros
at the lattice points of $\Lambda$, a function given by (\ref{vgl11}) is elliptic 
of order $r$; the zeros and poles are $ a_{1}, \! \cdots \! ,a_{r} $ resp. $ b_{1}, \! \cdots \! ,b_{r} $ (cf. \cite{M2}).  Such a function determines precisely one element of $N_{r}(\Lambda)$.  (Note that if we choose $ a_{1}, \! \cdots \! ,a_{r} ,b_{1} \! \cdots \! b_{r}$ 
in any other period parallelogram, we obtain a representative of the same Newton flow, cf. Section \ref{sec3}; moreover, the incidental role of $b_{r}$ does not affect generality)

Altogether, we have proved:
\begin{lemma}
\label{L4.1}
The flows in $N_{r}(\Lambda)$ are given
by 
all ordered pairs $(\{ [a_{1}],
\! \cdots \! ,[a_{r} ]\}$, $\{ [b_{1}], \! \cdots \! ,[b_{r} ]\})$ of sets of classes $\mathrm {mod} \, \Lambda$ that fulfil 
(\ref{vgl10}).
\end{lemma}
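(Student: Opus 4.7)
The plan is to establish a bijective correspondence between $N_{r}(\Lambda)$ and the set of ordered pairs $(\{[a_{1}],\ldots,[a_{r}]\},\{[b_{1}],\ldots,[b_{r}]\})$ of unordered multisets in $\mathbb{C}/\Lambda$ satisfying (\ref{vgl10}), by giving explicit maps in both directions and checking that they are mutually inverse.

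For the forward direction, I would start from a flow $\overline{\overline{\mathcal{N}}}([f])\in N_{r}(\Lambda)$. By the identification $N_{r}(\Lambda)=\{[f]\mid f\in E_{r}(\Lambda)\}$ already made in Section \ref{sec4}, the flow determines the class $[f]$. Choose any representative $f$ and any pair of basic periods $(\omega_{1},\omega_{2})$ generating $\Lambda$; the $r$ zeros $a_{1},\ldots,a_{r}$ and $r$ poles $b_{1},\ldots,b_{r}$ of $f$ in $P_{\omega_{1},\omega_{2}}$ satisfy (\ref{vgl9}) by the standard theory of elliptic functions (Liouville / Abel relations), hence passing to congruence classes gives (\ref{vgl10}). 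A short check shows the resulting multisets of classes are independent of the choice of representative in $[f]$ (rescaling by $C\neq 0$ does not change zeros/poles) and of the pair of basic periods (the two period parallelograms contain the same classes mod $\Lambda$).

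For the backward direction, given data $(\{[a_{i}]\},\{[b_{j}]\})$ satisfying (\ref{vgl10}), pick representatives $a_{i},b_{j}$ inside a parallelogram $P_{\omega_{1},\omega_{2}}$ and define $b'_{r}$ as in the excerpt. By (\ref{vgl10}) we have $[b'_{r}]=[b_{r}]$, and by construction $a_{1}+\cdots+a_{r}=b_{1}+\cdots+b_{r-1}+b'_{r}$ exactly in $\mathbb{C}$. The cited properties of the Weierstrass $\sigma$-function (holomorphic, quasi-periodic, with simple zeros precisely at the lattice points) then imply that any function of the form (\ref{vgl11}) is elliptic of order $r$ with zeros $\{a_{i}\}$ and poles $\{b_{j}\}$; call any such function $f$. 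Then $[f]\in E_{r}(\Lambda)/\sim$ yields an element of $N_{r}(\Lambda)$.

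It remains to show the two constructions are inverse and that the element of $N_{r}(\Lambda)$ produced does not depend on the various choices. That the forward map followed by the backward map reproduces $[f]$ is a consequence of Liouville's theorem (two elliptic functions with the same zeros and poles differ by a multiplicative constant), already recalled in Section \ref{sec3}. The other composition is immediate from the definition. The main obstacle, and the only genuinely delicate point, is the well-definedness of the backward map: one must verify that (a) replacing $b_{r}$ by $b'_{r}$ in (\ref{vgl11}) gives a function congruent to any other valid choice modulo multiplication by a nonzero constant (hence the same class $[f]$, by Liouville), and (b) different choices of representatives of the classes $[a_{i}],[b_{j}]$ inside different period parallelograms yield the same element of $N_{r}(\Lambda)$. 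Point (a) is handled by noting that the product (\ref{vgl11}) depends on the unordered multisets only up to the constant $C$, together with the freedom in choosing which pole to designate as ``$b_{r}$''; point (b) is exactly what was established at the end of Section \ref{sec3} and in Lemma \ref{L3.1}. This completes the correspondence.
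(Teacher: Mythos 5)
Your proposal is correct and follows essentially the same route as the paper: the forward direction via Liouville's theorem (the class $[f]$ is determined by its zeros and poles, which satisfy (\ref{vgl9}) and hence (\ref{vgl10})), and the backward direction via the Weierstrass $\sigma$-quotient (\ref{vgl11}) with the adjusted pole $b'_{r}$ enforcing the exact (not merely mod $\Lambda$) sum condition. You are somewhat more explicit than the paper about checking that the two constructions are mutually inverse and that the backward map is independent of the choice of representatives, parallelogram, and designated pole, but these are exactly the points the paper addresses (more briefly) by appeal to Section \ref{sec3} and the remark on the ``incidental role of $b_{r}$''.
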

\begin{remark}
\label{R4.2}
Interchanging the roles of $(\{ [a_{1}],
\! \cdots \! ,[a_{r} ]\}$
and $\{ [b_{1}], \! \cdots \! ,[b_{r} ]\})$
reflects the duality property, cf. (\ref{vgl6}). In fact, we have $\overline{\overline{\mathcal{N}}} ([\frac{1}{f}]) =- \overline{\overline{\mathcal{N}}} ([f])$.
\end{remark}
\noindent
On the subset 
$V_{r}(\Lambda)$ in $T^{r}(\Lambda) \times T^{r}(\Lambda)$ of pairs $(c,d), \;c=([c_{1}], \! \cdots \! , [c_{r}]), d=([d_{1}], \! \cdots \! , [d_{r}])$, that fulfil condition (\ref{vgl10}), we define an equivalence relation $(\approx)$:
$$
(c,d) \approx (c^{'},d^{'}) \text{ iff } \{ [c_{1}], \! \cdots \! , [c_{r}] \}=\{ [c^{'}_{1}], \! \cdots \! , [c^{'}_{r}] \} \text{ and }\{ [d_{1}], \! \cdots \! , [d_{r}] \}=\{ [d^{'}_{1}], \! \cdots \! , [d^{'}_{r}] \}
$$
\noindent
$\underline{\text{The topology $\tau_{0}$ on $E_{r}(\Lambda)$}}$\\

\noindent
Clearly, the set  $V_{r}(\Lambda) /\! \! \approx$
may be identified with a representation space for $N_{r}(\Lambda)$ 
and thus for $\{[f ] \mid f \in E_{r}(\Lambda)\}$. 
This space can be endowed with a topology which is successively induced by the quotient topology on $T(\Lambda)=(\mathbb{C}/\Lambda))$, the product topology on $T^{r}(\Lambda) \times T^{r}(\Lambda)$, the relative topology on $V_{r}(\Lambda)$ as a subset of $T^{r}(\Lambda) \times T^{r}(\Lambda)$, and the quotient topology w.r.t. the relation$\; \approx$.\\
Finally, we endow $E_{r}(\Lambda)$ with the weakest topology, say $\tau_{0}$, making the mapping 
$$
E_{r}(\Lambda) \rightarrow N_{r}(\Lambda)
: f \mapsto [f]
$$
continuous.

The topology $\tau_{0}$ on $E_{r}(\Lambda)$ is induced by the Euclidean topology on $\mathbb{C}$, and is natural in the following sense: Given $f $ in $ E_{r}(\Lambda)$ and $\epsilon >0$ sufficiently small, a $\tau_{0}$-neighbourhood $\mathcal{O}$ of $f$ exists such that for any $g \in \mathcal{O}$ , the zeros (poles) for $g$ are contained in $\varepsilon$-neighbourhoods of the zeros (poles) for $f$.

Uptill now, we dealt with elliptic Newton flows  $\overline{\overline{\mathcal{N}}} (f)$ with respect to an arbitrary, but fixed, lattice, namely the lattice $\Lambda$ for $f$. Now, we turn over to a different 
lattice, say $\Lambda^{*}$, i.e., pairs of basic periods for $\Lambda$ and $\Lambda^{*}$ are not necessarily related by a unimodular transformation.
Firstly, we treat a simple case: For $\alpha \in \mathbb{C}\backslash \{0\}$
, we define $f^{\alpha}(z)=f(\alpha^{-1}z)$. Thus $f^{\alpha}$ is an elliptic function, of order $r$, with basic periods $(\alpha \omega_{1}, \alpha \omega_{2})$ generating  the lattice $\Lambda^{*}=\alpha \Lambda$.

The following lemma is in the same spirit as Lemma \ref{L3.1}. 
\begin{lemma}
\label{L4.3}
The transformation $z \mapsto w=\alpha z$
induces a homeomorphism from the torus $T_{\omega_{1},\omega_{2}}$ onto $T_{\alpha \omega_{1}, \alpha \omega_{2}}$ , such that the phase portraits of the flows $\overline{\overline{\mathcal{N}}} ([f])
$ and   
$\overline{\overline{\mathcal{N}}} ([f^{\alpha}])
$ correspond  under this homeomorphism, thereby respecting the orientations of the trajectories. 
\end{lemma}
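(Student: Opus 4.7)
The plan is to directly construct the required homeomorphism and then verify that it carries trajectories to trajectories with matching orientations, exploiting the characterization (\ref{vgl5}) that trajectories are level sets of $\arg f$. First, I would introduce the $\C$-linear map $\phi_{\alpha}: \C \to \C$, $z \mapsto \alpha z$. Since $\alpha \neq 0$, this is a real-analytic diffeomorphism that sends the lattice $\Lambda$ bijectively onto $\Lambda^{*} = \alpha \Lambda$, and therefore descends to a homeomorphism $\overline{\phi_{\alpha}}: T_{\omega_{1}, \omega_{2}} \to T_{\alpha\omega_{1}, \alpha\omega_{2}}$ with inverse induced by $w \mapsto \alpha^{-1}w$.

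Next I would verify that trajectories correspond as point sets. By definition $f^{\alpha}(w) = f(\alpha^{-1}w)$, so whenever $w = \alpha z$ one has $f^{\alpha}(w) = f(z)$; in particular $\arg f^{\alpha}(w) = \arg f(z)$ wherever both sides are defined. In view of (\ref{vgl5}), the $\overline{\mathcal{N}}(f)$-trajectories are the curves $\arg f = \text{const}$, and likewise for $f^{\alpha}$, so $\phi_{\alpha}$ maps the former family bijectively onto the latter. Equilibria also correspond: zeros and poles of $f^{\alpha}$ are precisely the $\phi_{\alpha}$-images of those of $f$, and since $(f^{\alpha})'(w) = \alpha^{-1}f'(\alpha^{-1}w)$, the same is true for critical points. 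Passing to the quotient tori preserves this correspondence.

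To handle the orientations, I would invoke (\ref{vgl4}): along every $\mathcal{N}(f)$-trajectory the modulus $|f|$ is strictly decreasing in $t$, and this monotonicity persists for $\overline{\mathcal{N}}(f)$ since the additional damping factor $(1+|f|^{4})^{-1}|f'|^{2}$ is nonnegative and vanishes only at critical points (cf. Fig. \ref{Figure1}). Because $|f^{\alpha}(w)| = |f(z)|$ under $w = \alpha z$, the image of an $\overline{\mathcal{N}}(f)$-trajectory is traversed in the direction of decreasing $|f^{\alpha}|$, which by the same principle is the orientation of the corresponding $\overline{\mathcal{N}}(f^{\alpha})$-trajectory. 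Descending to $T(\Lambda)$ and $T(\Lambda^{*})$ gives the claim for $\overline{\overline{\mathcal{N}}}([f])$ and $\overline{\overline{\mathcal{N}}}([f^{\alpha}])$.

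The only subtle point — and the main thing to keep clear — is that $\phi_{\alpha}$ does not conjugate the $C^{1}$-vector fields themselves: applying the chain rule to the $\R^{2}$-expression (\ref{vgl7}) for $\overline{\mathcal{N}}(f^{\alpha})$ (using that multiplication by $\alpha$ is represented by a real $2 \times 2$-matrix $A$ with $\det A = |\alpha|^{2} > 0$) produces a positive scalar and a linear factor $A$ that generally alter the speed and direction of the velocity vector. However, as in the analysis following (\ref{verg8}), the positivity of $\det A$ guarantees that only a smooth, orientation-preserving reparametrization of each trajectory is introduced, which leaves the phase portrait — and hence the statement of the lemma — unaffected.
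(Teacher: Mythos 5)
Your proof is correct, but it is organized rather differently from the paper's. The paper's entire argument is the single chain-rule computation that you relegate to your closing ``subtle point'': substituting $w=\alpha z$ into (\ref{vgl3}) and using $(f^{\alpha})'(w)=\alpha^{-1}f'(\alpha^{-1}w)$ gives
$\frac{dw}{dt}=-|\alpha|^{2}(1+|f^{\alpha}(w)|^{4})^{-1}\overline{(f^{\alpha})'(w)}\,f^{\alpha}(w)$,
i.e.\ exactly $|\alpha|^{2}>0$ times the vector field of $\overline{\mathcal{N}}(f^{\alpha})$; since a positive scalar factor changes neither trajectories nor orientations, the lemma follows at once after descending to the tori. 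Your main route instead deduces the correspondence of trajectories from the level-set description (\ref{vgl5}) (together with the matching of equilibria, which you rightly check so that level sets broken by critical points still decompose correspondingly) and the orientations from the monotone decay of $|f|$ in (\ref{vgl4}); this is a perfectly valid, slightly more ``phase-portrait-intrinsic'' argument that avoids differentiating anything. Two small remarks: your statement that the linear factor $A$ ``generally alters the \emph{direction} of the velocity vector'' is misleading here --- in the $\C$-linear case the factor $\alpha$ from the chain rule and the factor $\overline{\alpha}$ from $\overline{(f^{\alpha})'}$ combine into the positive real $|\alpha|^{2}=\det A$, so the pushed-forward field is a genuine positive rescaling, not merely orientation-compatible; and once you have carried out that computation, your first three paragraphs are no longer needed, so the proof could be compressed to essentially the paper's one-liner.
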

\begin{proof}
Under the transformation $w=\alpha z$, the flow $\overline{\mathcal{N} }(f)$ given by (3), transforms into:
\begin{equation*}
\frac{dw}{dt}= -|\alpha |^{2}(1+|f^{\alpha}(w)|^4)^{-1}  \overline{(f^{\alpha})^{'} (w)}  f^{\alpha}(w).
\end{equation*}
Since $|\alpha |^{2}>0$, this $C^{1}$-flow on $\mathbb{C}$ has the same phase portraits as $\overline{\mathcal{N} }(f^{\alpha})$. 
The assertion follows because the transformation $w=\alpha z$ induces a homeomorphism between the tori 
$T_{\omega_{1},\omega_{2}}$ and $T_{\alpha \omega_{1}, \alpha \omega_{2}}$
\end{proof}
In other words: from a topological point of view, the Newton flows $\overline{\overline{\mathcal{N}}} ([f])\in N_{r}(\Lambda)$ and 
$\overline{\overline{\mathcal{N}}} ([f^{\alpha}])\in N_{r}(\alpha \Lambda)$ may be considered as equal.

More general, we call the Newton flows $\overline{\overline{\mathcal{N}}} ([f])\in N_{r}(\Lambda)$ and $\overline{\overline{\mathcal{N}}} ([g])\in N_{r}(\Lambda^{*})$ equivalent ($\sim$) if they attain representatives, say $\overline{\overline{\mathcal{N}}}_{\omega_{1},\omega_{2}} (f)$, respectively  $\overline{\overline{\mathcal{N}}}_{\omega^{*}_{1},\omega^{*}_{2}} (g)$, and  there is a homeomorphism $T_{\omega_{1},\omega_{2}} \rightarrow T_{\omega^{*}_{1},\omega^{*}_{2}}$ , induced by the linear (over $\mathbb{R}
$) basis transformation 
$(\omega_{1},\omega_{2}) \mapsto
(\omega^{*}_{1},\omega^{*}_{2})$, such that their phase portraits correspond under this homeomorphism, thereby respecting the orientations of the trajectories.
\\ \\
From now on, we choose for $(\omega_{1},\omega_{2})$ a pair of so-called 
{\it reduced}\footnote{The pair of basic periods $(\omega_{1},\omega_{2})$ for $f$  is called {\it reduced} or {\it primitive} if $|\omega_{1}|$ is minimal among all periods for $f$ , whereas $| \omega_{2} |$ is minimal among all periods $\omega$ for $f$  with the property ${\rm Im}\frac{\omega}{\omega_{1}} >0$ (cf. \cite{Cha}).}  periods for $f$, so that the quotient $\tau= \frac{\omega_{2}}{\omega_{1}}$
satisfies the conditions: 
\begin{equation}
\label{vgl12}
\begin{cases}
& \text{Im } \tau  > 0,        | \tau | \geqslant 1,       - \frac{1}{2} \leqslant  \text{Re }  \tau  <\frac{1}{2},\\
&  \text{Re } \tau \leqslant 0, \text{ if }    | \tau |=1
\end{cases}
\end{equation}
(Such a choice is always possible (cf. \!\!\! \cite{Cha}).                                                                                                                    
Moreover, $\tau$ is unique in the following sense:  if $(\omega^{'}_{1},\omega^{'}_{2})$ is another pair of reduced periods for $f$, such that $\tau^{'}\!=\! \frac{\omega^{'}_{2}}{\omega^{'}_{1}}$ also satisfies  the conditions (\ref{vgl12}), then $\tau=\tau^{'}$). 

Together with Lemma \ref{L4.3} this yields:
$$
\text{$\overline{\mathcal{N} }([f])$ is equivalent with 
$\overline{\mathcal{N} }([f^{\frac{1}{\omega_{1}}}])$ in $\Lambda_{1, \tau}$ and $(1, \tau)$ a pair of reduced periods for $f^{\frac{1}{\omega_{1}}}$.}
$$

More generally we have:

\begin{lemma}
\label{L4.4}
Let $f$ be -as before- an elliptic function of order $r$ with $\Lambda$ as period lattice, and let $\Lambda^{*}$ be an arbitrary lattice. Then, there exists a function, say $f^{*}$, with $f^{*} \in E_{r}(\Lambda^{*})$, such that 
$\overline{\overline{\mathcal{N}}} ([f]) \sim \overline{\overline{\mathcal{N}}} ([f^{*}])$.
\end{lemma}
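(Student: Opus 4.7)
The strategy I'd pursue is to build $f^*$ from $f$ by transporting the zero/pole data along an $\mathbb{R}$-linear isomorphism $L:\mathbb{C}\to\mathbb{C}$ with $L(\Lambda)=\Lambda^*$, and then read off the claimed equivalence from the representation result (Lemma~\ref{L4.1}).

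Concretely, first fix bases $(\omega_1,\omega_2)$ for $\Lambda$ and $(\omega_1^*,\omega_2^*)$ for $\Lambda^*$ and let $L$ be the unique $\mathbb{R}$-linear map with $L(\omega_i)=\omega_i^*$ for $i=1,2$. Then $L$ is an $\mathbb{R}$-linear bijection between the two lattices and carries $P_{\omega_1,\omega_2}$ onto $P_{\omega_1^*,\omega_2^*}$; write $\tilde L$ for the induced torus homeomorphism. Let $a_1,\ldots,a_r$ and $b_1,\ldots,b_r$ in $P_{\omega_1,\omega_2}$ be the zeros and poles of $f$, and put $a_i^*=L(a_i)$, $b_j^*=L(b_j)$. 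Since $L$ is additive and $L(\Lambda)=\Lambda^*$, relation (\ref{vgl9}) for $f$ transports to the analogous relation for the starred data; in particular, with $b_r^{*\prime}=a_1^*+\cdots+a_r^*-b_1^*-\cdots-b_{r-1}^*$ we have $[b_r^{*\prime}]=[b_r^*]$ mod $\Lambda^*$. Formula (\ref{vgl11}), applied with the Weierstrass sigma function of $\Lambda^*$ and the starred data, then produces an $f^*\in E_r(\Lambda^*)$ whose zero/pole sets are the $\tilde L$-images of those of $f$.

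What remains, and is the principal obstacle, is to verify that $\overline{\overline{\mathcal{N}}}([f])\sim\overline{\overline{\mathcal{N}}}([f^*])$, i.e., that $\tilde L$ carries the phase portrait of $\overline{\overline{\mathcal{N}}}_{\omega_1,\omega_2}(f)$ onto that of $\overline{\overline{\mathcal{N}}}_{\omega_1^*,\omega_2^*}(f^*)$, orientations included. The difficulty is that $L$ is generally not $\mathbb{C}$-linear, so the level curves $\arg f=\text{const}$ are not mapped verbatim to $\arg f^*=\text{const}$, and the correspondence must be secured at the purely topological level. I would factor $L$ via Lemma~\ref{L4.3}, absorbing its $\mathbb{C}$-linear part into a preliminary rescaling of $\Lambda$, and reduce to the case $\omega_1=\omega_1^*$. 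Then interpolate $\omega_2$ to $\omega_2^*$ along a path $\omega_2(s)$ in the upper half-plane; run the transported zero/pole data through (\ref{vgl11}) to obtain a $C^{1}$-family of desingularized toroidal flows (\ref{vgl7}) depending smoothly on $s$. Using the smooth parameter-dependence together with the absence of closed orbits (Lemma~\ref{L3.3}) and Remarks~\ref{R1.1}, \ref{R3.4} on hyperbolicity of equilibria, a standard continuity argument for $C^1$ vector fields on compact surfaces produces, at each $s$, a homeomorphism of tori conjugating the flows; composing over $s\in[0,1]$ yields a topological conjugacy realized precisely by the basis-induced map $\tilde L$, which is the required equivalence.
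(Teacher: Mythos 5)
Your construction of $f^*$ is exactly the paper's: transport the zero/pole tuples by the $\mathbb{R}$-linear basis map $H$ (your $L$), observe that they satisfy (\ref{vgl9}) with $\Lambda^*$ in the role of $\Lambda$, and invoke (\ref{vgl11}) together with Lemma \ref{L4.1}. The divergence --- and the gap --- lies in the verification of $\overline{\overline{\mathcal{N}}}([f])\sim\overline{\overline{\mathcal{N}}}([f^*])$. The paper settles this in one line by the same chain-rule computation that produced (\ref{verg8}): pushing the $\mathbb{R}^2$-version (\ref{vgl7}) of the planar flow forward under the $\mathbb{R}$-linear map $H$ multiplies the right-hand side by the positive scalar $\det(H)$, so the basis-induced homeomorphism carries the oriented phase portrait of $\overline{\mathcal{N}}(f)$ onto that of the desingularized field attached to the transported data, i.e.\ to $f^*$. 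No conformality of $H$ is required, precisely because the comparison is made at the level of the desingularized $\mathbb{R}^2$-field rather than at the level of the curves $\arg f=\mathrm{const}$; the non-conformality you worry about is absorbed into the positive factor $\det(H)$.

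Your substitute argument does not close. The ``standard continuity argument'' you invoke --- conjugating each flow in the $C^1$-family $s\mapsto\overline{\overline{\mathcal{N}}}(f_s)$ to its neighbours and composing over $s\in[0,1]$ --- amounts to structural stability of every flow along the path, and that is exactly what is unavailable: the lemma must hold for arbitrary $f\in E_r(\Lambda)$, including degenerate ones (multiple zeros, poles or critical points, saddle connections), whose flows are, by Theorem \ref{T5.6}, \emph{not} conjugate to arbitrarily nearby flows; hyperbolicity of equilibria (Remarks \ref{R1.1}, \ref{R3.4}) is likewise guaranteed only in the simple case. So at degenerate parameters $s$ the local conjugacies you need simply do not exist. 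Moreover, even where such conjugacies exist they are produced abstractly, whereas the relation $\sim$ as defined in Section \ref{sec4} requires the conjugating homeomorphism to be the one induced by the basis transformation $(\omega_1,\omega_2)\mapsto(\omega_1^*,\omega_2^*)$; your closing claim that the composite conjugacy ``is realized precisely by $\tilde L$'' is asserted, not proved. Replace the homotopy by the direct chain-rule computation on (\ref{vgl7}), using only $\det(H)>0$.
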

\begin{proof}
Choose $(\omega_{1}, \omega_{2})$ and $(\omega^{*}_{1}, \omega^{*}_{2})$, ${\rm Im}\frac{\omega_{2}^{*}}{\omega_{1}^{*}} >0,$ as basic periods for $\Lambda$ resp. $\Lambda^{*}$ and let $H$ be a linear basis transformation from $(\omega_{1}, \omega_{2})$ to $(\omega^{*}_{1}, \omega^{*}_{2})$. The zeros and poles for $ f$  are represented by the tuples $\{[a_{1}], \!\! \cdots \!\! , [a_{r}] \} $ resp. $ \{[b_{1}], \!\! \cdots \!\! , [b_{r}] \}$ in $P_{\omega_{1},\omega_{2}}$ that fulfil (\ref{vgl9}). Under $H$  these tuples turn into tuples  $\{[a_{1}^{*}], \!\! \cdots \!\! , [a_{r}^{*}] \} $ , $ \{[b_{1}^{*}], \!\! \cdots \!\! , [b_{r}^{*}] \}$ in $P_{\omega_{1}^{*},\omega_{2}^{*}}$, satisfying (\ref{vgl9}) with $\Lambda^{*}$ in the role of $\Lambda$. The latter pair of tuples determines a function $f^{*}$ in $E_{r}(\Lambda^{*})$ and thus, by Lemma \ref{L4.1}, a Newton  flow $\overline{\overline{\mathcal{N}}} ([f^{*}])$ in $N_{r}(\Lambda^{*})$. Now, the chain rule applied to the $\mathbb{R}^{2}$-versions (cf. (\ref{vgl7})) of $\overline{\overline{\mathcal{N}}} ([f])$ and $\overline{\overline{\mathcal{N}}} ([f^{*}])$ yields the assertion.  (Use that, by assumption, $\det(H)>0$).
\end{proof}
\begin{remark}
\label{R4.5}
Note that if 
$\Lambda=\Lambda^{*} $
(thus the basic periods for $\Lambda$, and $\Lambda^{*}$ are related by unimodular transformations), then: $f= f^{*}$.
Also we have: 
$g \in [f] $ implies $g^{*} \in [f^{*}]$.
\end{remark}
We summarize the homogeneity results, specific for continuous elliptic Newton flows, as  obtained in this and the preceding section 
(choose $\Lambda^{*}=\Lambda_{1,\tau}: \tau=i$)
\begin{theorem}$($The canonical form for elliptic Newton flows$)$\\
\label{T4.6}
Given an arbitrary elliptic Newton flow, say $\overline{\overline{\mathcal{N}}} ([f])$, on $T(\Lambda)$, there exists a function $f^{*}$ in $ E_{r}(\Lambda_{1,i})$, of the form
(\ref{vgl11}) with $ a_{1}, \! \cdots \! ,a_{r} $ and $ b_{1}, \! \cdots \! ,b_{r} $ in the parallellogram $P_{1,i}$ and $C=1$, such that
$$
\overline{\overline{\mathcal{N}}} ([f]) \sim \overline{\overline{\mathcal{N}}} ([f^{*}]).
$$
In particular, it is always possible to choose $\Lambda^{*}=\Lambda_{1,\tau}$, $\tau$ as in (\ref{vgl12})
and to apply the linear transformation $(1,\tau) \mapsto (1,i)$.
So that we even may assume that $(1,i)$ is a pair of reduced periods for the corresponding elliptic function on $\Lambda_{1,i}$.\\
\end{theorem}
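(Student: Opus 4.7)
The plan is a clean two-step normalization followed by writing down the canonical representative. Starting from an arbitrary $f\in E_r(\Lambda)$, I would first choose a pair of reduced periods $(\omega_1,\omega_2)$ for $f$ so that $\tau:=\omega_2/\omega_1$ satisfies the conditions (\ref{vgl12}); such a choice exists by the result cited from \cite{Cha}. Setting $\alpha=1/\omega_1$ and applying Lemma \ref{L4.3} then gives $\overline{\overline{\mathcal{N}}}([f])\sim\overline{\overline{\mathcal{N}}}([f^{1/\omega_1}])$, with $f^{1/\omega_1}\in E_r(\Lambda_{1,\tau})$ and $(1,\tau)$ a pair of reduced periods for $f^{1/\omega_1}$.

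Next, to pass from $\Lambda_{1,\tau}$ to the square lattice $\Lambda_{1,i}$, I would invoke Lemma \ref{L4.4} with the unique $\mathbb{R}$-linear basis transformation $H$ sending $(1,\tau)\mapsto(1,i)$. Note that $H$ is in general \emph{not} unimodular, so Lemma \ref{L3.1} alone would not suffice here; this is really the only substantive point in the argument. Since both $\tau$ and $i$ lie in the upper half-plane, $\det(H)>0$, so Lemma \ref{L4.4} produces a function $f^*\in E_r(\Lambda_{1,i})$ with $\overline{\overline{\mathcal{N}}}([f^{1/\omega_1}])\sim\overline{\overline{\mathcal{N}}}([f^*])$. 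Transitivity of the relation $\sim$ then yields the desired equivalence $\overline{\overline{\mathcal{N}}}([f])\sim\overline{\overline{\mathcal{N}}}([f^*])$.

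To exhibit $f^*$ in the prescribed canonical form, I would use Lemma \ref{L4.1}: tracking the zeros and poles of $f^{1/\omega_1}$ through $H$ gives tuples $\{[a_1^*],\ldots,[a_r^*]\}$ and $\{[b_1^*],\ldots,[b_r^*]\}$ of classes $\bmod\,\Lambda_{1,i}$ satisfying (\ref{vgl10}); picking representatives inside $P_{1,i}$ and adjusting $b_r^*$ as in the discussion preceding (\ref{vgl11}) produces a representative of $[f^*]$ of the form (\ref{vgl11}) over $\Lambda_{1,i}$. Since the toroidal flow $\overline{\overline{\mathcal{N}}}([f^*])$ depends only on the class $[f^*]$, the nonzero constant $C$ in (\ref{vgl11}) is immaterial and can be normalized to $C=1$, completing the construction. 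No real obstacle arises; the content of the theorem is just the correct orchestration of Lemmas \ref{L4.3}, \ref{L4.4} and \ref{L4.1} into a single normalization statement.
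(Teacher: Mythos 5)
Your proof is correct and matches the paper's approach: Theorem~\ref{T4.6} is presented in the paper without a separate proof, as a summary of Lemmas~\ref{L4.3} and~\ref{L4.4} (with Lemma~\ref{L4.1} supplying the form~(\ref{vgl11})), and your two-step normalization --- first to $\Lambda_{1,\tau}$ with $\tau$ reduced via Lemma~\ref{L4.3}, then to $\Lambda_{1,i}$ via Lemma~\ref{L4.4} with $\det H>0$ --- is exactly the orchestration the authors intend. Your remark that $H$ need not be unimodular, so Lemma~\ref{L3.1} alone would not suffice, correctly identifies the one genuinely substantive point.
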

Hence, in the sequel, we suppress - unless strictly necessary - references to:  $(1, \tau), \Lambda,$ $ \text{ class }[\cdot], \cdots$ and write $\Lambda, T, E_{r}, N(f), \cdots$
instead of $\Lambda(1, \tau), T(\Lambda), E_{r}(\Lambda), N([f]), \cdots$.\\

We end up by presenting two pictures of Newton flows for ${\rm sn}$
, where ${\rm sn}$
stands for a Jacobian function. This is a 2nd order elliptic function, attaining only simple zeros, poles and critical points,
characterized by the basic periods $4K, 2iK'$. 
So does the phase portrait of its Newton flow. Here $K, K'$ are two parameters defined in terms of the Weierstrass function $\wp$.

It turns out that for the phase portrait there are -up to topological equivalency- only two possibilities, corresponding to the form (rectangular or not) of the parallelogram $P_{1, \tau}$ with $(1, \tau) \in D$ and $\tau=(\frac{2iK'}{4K})\text{ mod } 1.$
In fact, the crucial distinction between these two cases is whether there occur so-called ``saddle-connections'' (i.e., (un)stable manifolds connecting saddles) or not (cf. \cite{THBS}).
 Hence, it is sufficient to select for each possibility one suitably chosen example.
See Fig. \ref{Figure9} [non-rectangular, equiharmonic subcase, given by 
$\tau=\frac{1}{3}\sqrt{3} \exp(\frac{\pi i}{3})$]
and Fig. \ref{Figure10} [rectangular subcase given by $\text{Re}\tau=0$]. For a detailed argumentation, see our previous work \cite{THBS}
; compare also the Remarks 2.14 and  2.15 in the forthcoming \cite{HT2}.

Note that in Fig. \ref{Figure9}, \ref{Figure10} the points, labelled by $0, 4K, 2iK'$ and $4K+2iK'$ correspond to the same toroidal zero for ${\rm sn}$
(denoted by $\circ_{1}$)
, whereas both $2K $ and $2K+2iK'$ correspond to the other zero (denoted by $\circ _{2}$).

Similarly, $2K+iK'$ stands for a pole (denoted by $\text{\textbullet}_{3}$) on the torus, the pair $(iK', 4K+iK')$ for the other pole (denoted by $\text{\textbullet}_{4}$). The four torodial critical points (denoted by $+_{5}, \cdots, +_{8}$) are represented by respectively the pairs $(K, K+2iK'), (3K, 3K+2iK')$ and the points $K+iK'$ and $3K+iK'$
; see e.g. \cite{A/S} or \cite{M2}.

It is well-known that the periods $4K, 2iK'$
are not independent of each other, but related via a parameter $m, 0 < m < 1$, see e.g. \cite{A/S}.
In the situation of Fig.\ref{Figure10}: if $m \downarrow 0$, then $4K \rightarrow 2\pi,  \pm 2iK'  \rightarrow  \infty$ and the phase portraits of 
$\overline{\mathcal{N} }({\rm sn})$ 
turn into that of $\overline{\mathcal{N} }({\rm sin})$;
if $m \uparrow 1$, then $ \pm4K \rightarrow  \infty, 2iK' \rightarrow  2 \pi i$ 
and the phase portraits of $\overline{\mathcal{N} }({\rm sn})$ 
turn into that of $\overline{\mathcal{N} }({\rm tanh})$; compare also
 Fig. \ref{Figure5}, \ref{Figure6}.
 
 In part II (cf. \cite{HT2}) of our serial on elliptic Newton flows it has been proved - that up to conjugacy- there is only one 2nd order structurally stable elliptic Newton flow. So that, in a certain sense, the pictures in Fig. \ref{Figure9} represent all examples of possible structurally stable Newton flows of order 2. On the other hand, in the case of order $r=3$, there are several different possibilities as is explained in part III (cf. \cite{HT3}) of our serial.

\begin{figure}
\centering
\includegraphics[width=5.5in]{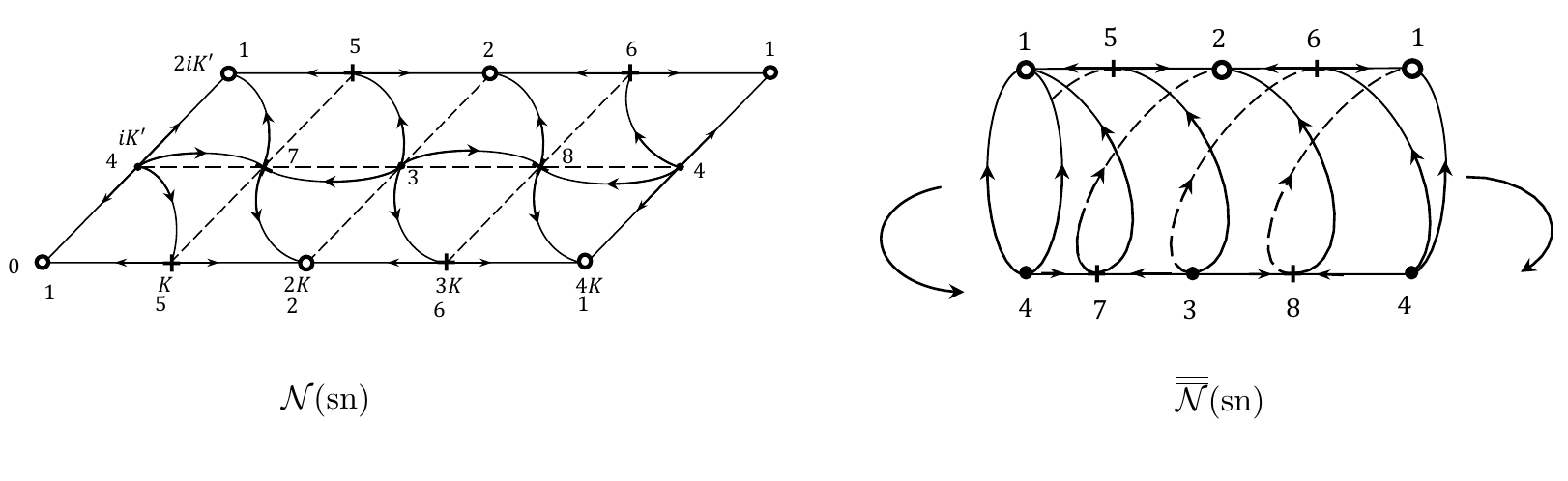}
\caption{ Newton flows for ${\rm sn}$; non-rectangular case; $\tau=\frac{1}{3}\sqrt{3} \exp(\frac{\pi i}{3})$}
\label{Figure9}
\end{figure}

\begin{figure}
\centering
\includegraphics[width=5.5in]{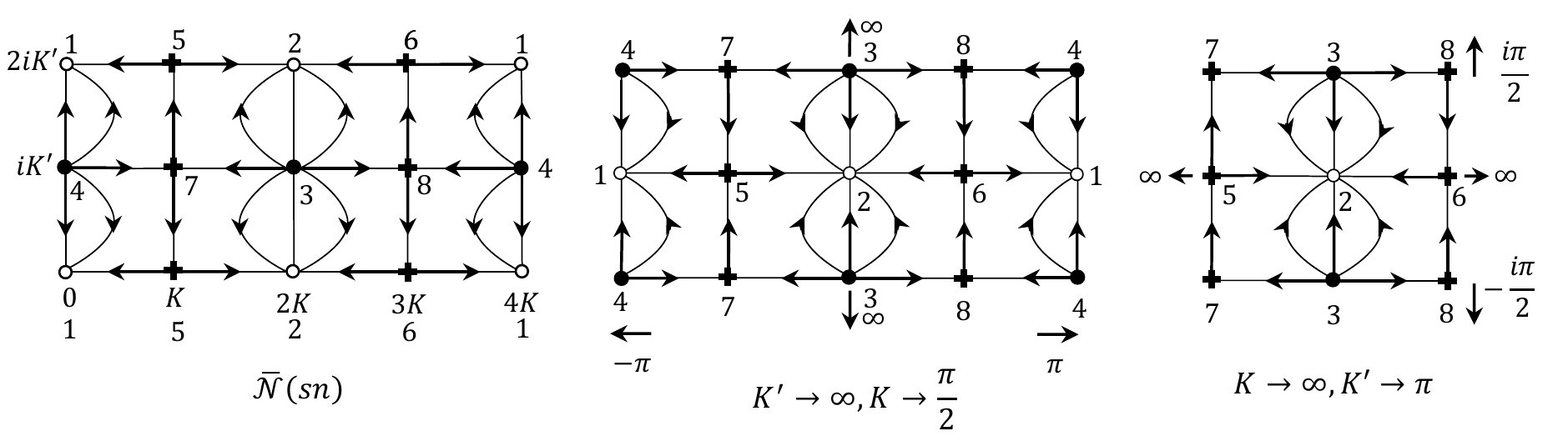}
\caption{ Newton flows for ${\rm sn}$; rectangular case; Re $\tau$=0}
\label{Figure10}
\end{figure}

\newpage
\section{Structural stability:  Characterization and Genericity}
\label{sec5}

Adopting the notations introduced in the preceding section, let $f$ be a function in the set $E_{r}$$(=E_{r}(\Lambda))$ and $\overline{\overline{\mathcal{N}}} (f)(=\overline{\overline{\mathcal{N}}} ([f])) \in N_{r}(=N_{r}(\Lambda))$ its associated Newton flow (as a smooth vector field on the torus $T(=T(\Lambda))$).

By $X(T)$
we mean the set of all $C^{1}$-vector fields on 
$T$, endowed with the $C^{1}$-topology (cf. \cite{Hir}). 
We consider the map: 
$$
\mathcal{F}_{\Lambda}: E_{r} \rightarrow X(T): f \mapsto \overline{\overline{\mathcal{N}}} (f)
$$

The topology $\tau_{0}$ on 
$E_{r}$ and the $C^{1}$-topology  on $X(T)$
are matched by: 
\begin{lemma}
\label{L5.1}
The map $\mathcal{F}_{\Lambda}$ is $\tau_{0}\!-\!C^{1}$ continuous.
\end{lemma}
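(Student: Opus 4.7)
The plan is to exploit the explicit Weierstrass $\sigma$-representation (\ref{vgl11}) of $f$. Fix $f\in E_r$ with zeros $a_1,\dots,a_r$ and poles $b_1,\dots,b_{r-1},b_r'$ in a period parallelogram, where $b_r'=\sum_i a_i-\sum_{j<r}b_j$ by (\ref{vgl10}), so that $f(z)=C\,\sigma(z-a_1)\cdots\sigma(z-a_r)\big/\bigl[\sigma(z-b_1)\cdots\sigma(z-b_{r-1})\sigma(z-b_r')\bigr]$. By the naturality property of $\tau_0$ noted after its definition, a $\tau_0$-neighborhood $\mathcal{O}$ of $f$ consists of functions $g\in E_r$ whose zeros and poles (in a fixed period parallelogram) lie in prescribed $\varepsilon$-neighborhoods of $a_i$ and $b_j$, say at $a_i^g$ and $b_j^g$. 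Writing $g$ in the same form (\ref{vgl11}) with parameters $a_i^g, b_j^g$ and $b_r'{}^g=\sum_i a_i^g-\sum_{j<r}b_j^g$, the dependence of $g$ on $(a_i,b_j)$ is jointly continuous in the $C^\infty$-topology on any compact subset of $\mathbb{C}$, since $\sigma$ is entire.

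To establish $C^1$-closeness of the induced vector fields on $T$, I would split the torus into two regimes. On any compact subset $K\subset T\setminus P(f)$, both $f$ and $g$ (for $g\in\mathcal{O}$ sufficiently small) are holomorphic, and by the continuity just noted, $g\to f$ in $C^2(K)$ together with first and second derivatives. Since $\overline{\mathcal{N}}(\cdot)$ is, by (\ref{vgl3}), a smooth rational expression in $f, \overline{f}, f', \overline{f'}$ with uniformly non-vanishing denominator $1+|f|^4\geqslant 1$, this implies $\overline{\mathcal{N}}(g)\to\overline{\mathcal{N}}(f)$ in $C^1(K)$. For a neighborhood $U_j$ of a pole $b_j$, chosen small enough to contain no zero of $f$ (and, for $g\in\mathcal{O}$, no zero of $g$), I would invoke the duality property (\ref{vgl6}): on $U_j$, $1/f$ and $1/g$ are holomorphic with zeros at $b_j$, $b_j^g$ of appropriate multiplicities, and the same $\sigma$-representation argument, with numerators and denominators interchanged, yields $\overline{\mathcal{N}}(1/g)\to\overline{\mathcal{N}}(1/f)$ in $C^1(U_j)$; hence $\overline{\mathcal{N}}(g)\to\overline{\mathcal{N}}(f)$ in $C^1(U_j)$ as well.

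Since $T$ is compact, finitely many such pole neighborhoods $U_j$ together with a compact $K\subset T\setminus P(f)$ cover $T$, and patching the local $C^1$-estimates, e.g.\ via a finite partition of unity, gives the desired global $C^1$-convergence. The main obstacle is the apparent singular behavior of $f$ itself near its poles; this is sidestepped cleanly via the duality relation (\ref{vgl6}), which converts the pole-neighborhood analysis into the zero-neighborhood analysis already handled. A minor technical point is that the constraint (\ref{vgl10}) is automatically preserved under $\tau_0$-perturbation through the very definition of $b_r'$, so the $\sigma$-representation (\ref{vgl11}) varies continuously on the free parameters $(a_1,\dots,a_r,b_1,\dots,b_{r-1})$ alone; one should also fix a suitable normalization of the multiplicative constant $C$ when passing from $f$ to nearby $g$, which is harmless because a rescaling factor enters smoothly into (\ref{vgl3}).
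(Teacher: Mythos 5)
Your proof is correct and rests on the same idea as the paper's: the $\sigma$-representation (\ref{vgl11}) makes the desingularized vector field depend continuously, together with its first-order derivatives, on the zero/pole parameters (with $b_r'$ eliminated via the sum condition), so that $\tau_0$-convergence of $g$ to $f$ forces $C^1$-convergence of the vector fields on the compact torus. The only difference is the bookkeeping at the poles: where you split $T$ into pole neighbourhoods (handled via the duality (\ref{vgl6})) and a complementary compact set and then patch, the paper substitutes $f=p/q$ directly into (\ref{vgl3}) and arrives at the single globally smooth expression (\ref{redNf}), whose denominator $|p|^4+|q|^4$ never vanishes, thereby avoiding any case distinction or partition of unity.
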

\begin{proof}
In accordance with 
Theorem \ref{T4.6} and (\ref{vgl11}) we assume 
\begin{equation*}
f(z)= \frac{\sigma(z \!-\! a_1)\! \cdots \! \sigma (z\! -\! a_r)} {\sigma(z \!-\! b_1) \! \cdots \! \sigma(z\! \!-\! \!b_{r-1})  \sigma (z\! -\! b^{'}_r)}.
\end{equation*}
Put $p(z)=\sigma(z \!- \!a_1)\! \cdots \! \sigma (z - a_r)$ and $q(z)=\sigma(z \!-\! b_1) \! \cdots \! \sigma(z \!-\! b_{r-1})  \sigma (z \!- \!b^{'}_r)$. Then, the planar version $\overline{\mathcal{N}} (f)$ of the flow $\overline{\overline{\mathcal{N}}} (f)$ 
takes the form: (cf. (\ref{vgl3}))
\begin{equation}
\label{redNf}
\frac{dz}{dt}=-(|p(z)|^{4}+|q(z)|^{4})^{-1}(p(z)\overline{p^{'}(z)}|q(z)|^{2}-q(z)\overline{q^{'}(z)}|p(z)|^{2})
\end{equation}
The expression in the r.h.s. is well-defined (since $|p(z)|^{4}+|q(z)|^{4} \neq 0$ for all $z$) and depends -  as function ($F$) on $\mathbb{R}^{2}$- continuously differentiable on $x$(=Re $z$) and $y$(=Im $z$). So does the Jacobi matrix ($DF$) of $F$.  Analogously, a function 
$g \in E_{r}$ chosen $\tau_{0}$-close to $f$ , gives rise to a system $\overline{\mathcal{N}} (g)$ and a function $G$ with Jacobi matrix $DG$. Taking into account the very definition of $C^{1}$-topology on $X(T)$
, the mapping $\mathcal{F}_{\Lambda}$ is continuous as a consequence of the following observation: If -w.r.t. the topology $\tau_{0}$- the function $g$ approaches $f$ , i.e. the zeros and poles for $g$ approach those for $f$, then $G$ and $DG$ approach $F$, respectively $DF$ on every compact subset of $\mathbb{R}^{2}$.
\end{proof}

Next, we make the concept of structural stability for elliptic Newton flows precise.
\begin{definition}
\label{D5.2}
Let $f, g$ be two functions in $E_{r}$.
Then, the associated Newton flows are called {\it conjugate}, denoted 
$\overline{\overline{\mathcal{N}}} (f) \sim \overline{\overline{\mathcal{N}}} (g)$
, if there is a homeomorphism from $T$ 
onto itself, mapping maximal trajectories of 
$\overline{\overline{\mathcal{N}}} (f)$ onto those of $\overline{\overline{\mathcal{N}}} (g)$
, thereby respecting the orientation of these trajectories. 
\end{definition}
Note that the above definition is compatible with the concept of ``
equivalent representations of elliptic Newton flows'' as introduced in Section 4; compare also (the comment on) Definition \ref{D3.2}.

\begin{definition}
\label{D5.3}
The flow $\overline{\overline{\mathcal{N}}} (f) $
in $N_{r}$ 
is called {\it $\tau_{0}$-structurally stable} if there is a $\tau_{0}$-neighborhood $\mathcal{O}$ of $f$, such that for all $g \in \mathcal{O}
$ we have: $\overline{\overline{\mathcal{N}}} (f) \sim \overline{\overline{\mathcal{N}}} (g)$. 
\newline
The set of all structurally stable Newton flows $\overline{\overline{\mathcal{N}}} (f)$
 is denoted  $\tilde{N}_{r}$.
\end{definition}
From Lemma \ref{L5.1} it follows:
\begin{corollary}
\label{C5.4}
If $\overline{\overline{\mathcal{N}}} (f)$, as an element of $\mathcal{X}(T)$, is $C^{1}$-structurally stable (\cite{Peix1})
, then this flow
is also $\tau_{0}$-structurally stable. 
\end{corollary}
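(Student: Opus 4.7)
The plan is to deduce the corollary as an immediate pullback of $C^{1}$-structural stability through the continuous map $\mathcal{F}_{\Lambda}$, so the real content has already been established in Lemma \ref{L5.1}; only the packaging remains.

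First I would recall precisely what $C^{1}$-structural stability of $\overline{\overline{\mathcal{N}}}(f)$ means as an element of $X(T)$: there exists a $C^{1}$-open neighbourhood $\mathcal{U} \subset X(T)$ of $\overline{\overline{\mathcal{N}}}(f)$ such that every $Y \in \mathcal{U}$ is topologically conjugate to $\overline{\overline{\mathcal{N}}}(f)$, in the sense that a homeomorphism of $T$ onto itself exists carrying the trajectories of $Y$ onto those of $\overline{\overline{\mathcal{N}}}(f)$ while preserving their orientation (cf.\ the cited Peixoto reference). This is exactly the conjugacy notion of Definition \ref{D5.2}, so the two notions of ``$\sim$'' match without any bridging argument.

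Next I would apply Lemma \ref{L5.1}: since $\mathcal{F}_{\Lambda}$ is $\tau_{0}$--$C^{1}$ continuous, the preimage $\mathcal{O} := \mathcal{F}_{\Lambda}^{-1}(\mathcal{U})$ is $\tau_{0}$-open in $E_{r}$, and by construction $f \in \mathcal{O}$. For any $g \in \mathcal{O}$, the vector field $\overline{\overline{\mathcal{N}}}(g) = \mathcal{F}_{\Lambda}(g)$ lies in $\mathcal{U}$, and hence by the choice of $\mathcal{U}$ is topologically conjugate to $\overline{\overline{\mathcal{N}}}(f)$. This shows $\overline{\overline{\mathcal{N}}}(f) \sim \overline{\overline{\mathcal{N}}}(g)$ for every $g$ in the $\tau_{0}$-neighbourhood $\mathcal{O}$ of $f$, which is precisely $\tau_{0}$-structural stability in the sense of Definition \ref{D5.3}.

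No step is technically difficult; the only point deserving a line of justification is the coincidence between the conjugacy used in Peixoto's $C^{1}$-theory and the one in Definition \ref{D5.2} (both being orientation-preserving trajectory-level homeomorphisms of $T$). Everything else is pure set-theoretic pullback through a continuous map, so I expect the proof to be very short -- essentially one displayed chain of implications following the four-step outline above.
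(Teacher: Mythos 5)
Your proposal is correct and is exactly the argument the paper has in mind: the paper simply states ``From Lemma \ref{L5.1} it follows'' and leaves the pullback of a $C^1$-open neighbourhood through the continuous map $\mathcal{F}_{\Lambda}$ implicit, which is precisely what you spell out. The one point you flag as deserving a line of justification -- that Peixoto's notion of conjugacy and that of Definition \ref{D5.2} coincide -- is indeed the only thing worth saying explicitly, and you handle it correctly.
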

So, when discussing structural stability in the case of elliptic Newton flows, we may skip the adjectives $C^{1}$ and $\tau_{0}$.
\begin{definition}
\label{D5.5}
The function $f$ in $E_{r}$
 is called {\it non-degenerate} if:
\begin{itemize}
\item All zeros, poles and critical points for $f$ are simple;  
\item No two critical points for $f$
are connected by an
$\overline{\overline{\mathcal{N}}} (f)$-trajectory.
\end{itemize}
The set of all non degenerate functions in $E_{r}$ 
is denoted by $\tilde{E}_{r}$.
\end{definition}
\noindent
Note: If $f$ is non-degenerate, then $\dfrac{1}{f}$ also, and these functions share their critical points; moreover, 
   the derivative $f'$ is elliptic of order $2r$ (=number, counted by multiplicity, of the poles for $f$ on $T$).
  Since all zeros for $f$ are simple, the $2r$ zeros for $f'$ (on $T$) are the critical points (all simple) for $f$, and
  we find that $f$, as a function on $T$, has precisely $2r$ different critical points. Compare also the
  forthcoming Lemma \ref{L5.7} (Case $A=B=r$).\\

\noindent 
The main result of this section is:
\begin{theorem} $($Characterization and genericity of structural stability.$)$
\label{T5.6} 
\begin{enumerate}
\item  
$\overline{\overline{\mathcal{N}}} (f) $ is 
structurally stable 
if and only if  
$f $ in $\tilde{E}_{r}$.
\item The set 
$\tilde{E}_{r}$ is open and dense
in $E_{r}$.
\end{enumerate}
\end{theorem}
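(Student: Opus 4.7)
The strategy is to reduce both parts of Theorem \ref{T5.6} to Peixoto's structural-stability theorem for $C^{1}$ vector fields on compact orientable $2$-manifolds (here, $T$), and then use Lemma \ref{L5.1} to transfer the results between the vector-field space $X(T)$ and the parameter space $E_{r}$. Recall that Peixoto's theorem states that a $C^{1}$ vector field on such a manifold is $C^{1}$-structurally stable if and only if (i) its equilibria and closed orbits are all hyperbolic and finite in number, (ii) there are no saddle connections, and (iii) every $\alpha$- and $\omega$-limit set is an equilibrium or a closed orbit; moreover the set of such vector fields is $C^{1}$-open in $X(T)$.

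For the characterization, equation (\ref{vgl4}) shows that $|f(z(t))|=e^{-t}|f(z_{0})|$ is strictly decreasing along every $\overline{\overline{\mathcal{N}}}([f])$-trajectory, ruling out closed orbits; by Lemma \ref{L3.3} every limit set is an equilibrium. Thus Peixoto's conditions (i) and (iii) reduce to ``all equilibria of $\overline{\overline{\mathcal{N}}}(f)$ are hyperbolic'', which by Remark \ref{R1.1} and the duality (\ref{vgl6}) is equivalent to all zeros, poles, and critical points of $f$ being simple. Together with the no-saddle-connection condition, this is exactly $f\in\tilde{E}_{r}$. Hence $f\in\tilde{E}_{r}$ is equivalent to $C^{1}$-structural stability of $\overline{\overline{\mathcal{N}}}(f)$, which by Corollary \ref{C5.4} implies $\tau_{0}$-structural stability. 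For the converse I would construct, for each way in which $f\notin\tilde{E}_{r}$ can fail, an arbitrarily $\tau_{0}$-small perturbation within $E_{r}$ producing a non-conjugate flow: a $k$-fold zero (pole) is split into $k$ distinct simple zeros (poles), adjusting $b_{r}$ to preserve (\ref{vgl9}), turning a single $k$-fold star-node (Fig.\ \ref{Figure2}(a),(b)) into $k$ simple ones; a multiple critical point is split similarly into simple saddles; a saddle connection is broken by the transversality argument described below.

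Openness of $\tilde{E}_{r}$ then follows from the characterization combined with the $C^{1}$-openness half of Peixoto's theorem and the continuity of $\mathcal{F}_{\Lambda}$: for $f\in\tilde{E}_{r}$ the flow $\overline{\overline{\mathcal{N}}}(f)$ lies in a $C^{1}$-open set of structurally stable fields whose $\mathcal{F}_{\Lambda}$-preimage is a $\tau_{0}$-open neighbourhood of $f$, and every element there has a Newton flow conjugate to $\overline{\overline{\mathcal{N}}}(f)$, hence lies in $\tilde{E}_{r}$. For density, given $f\in E_{r}$ I would build an arbitrarily $\tau_{0}$-close $\widetilde{f}\in\tilde{E}_{r}$ in three steps. \emph{Step 1}: perturb the zero- and pole-tuples so that they consist of $2r$ distinct points in $P_{1,i}$, adjusting $b_{r}$ to preserve (\ref{vgl9}); this is possible because (\ref{vgl9}) is a single (complex) constraint. \emph{Step 2}: with $p,q$ the numerator and denominator of (\ref{vgl11}), observe that ``$f$ has a multiple critical point'' means the discriminant of $p'q-pq'$ vanishes, cutting out a proper real-analytic subvariety of $V_{r}(\Lambda)/\!\!\approx$, hence generic parameters yield only simple critical points. \emph{Step 3}: break any remaining saddle connections by a further arbitrarily small parameter perturbation.

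The main obstacle will be Step 3. Since our vector fields are not free $C^{1}$-perturbations but are constrained to the finite-dimensional family (\ref{vgl11}), the standard Kupka--Smale transversality argument is not directly available. The key technical claim to establish is that the ``saddle-connection locus'' is a proper real-analytic subvariety of $V_{r}(\Lambda)/\!\!\approx$; concretely, that moving a single zero $a_{i}$ by a small amount produces a transverse displacement of the (un)stable separatrices of the nearby saddles, and that the real dimension $4r-2$ of the parameter space (after the constraint (\ref{vgl9})) suffices to break each of the finitely many saddle connections independently. The required transverse displacement can be extracted from the explicit dependence of $\overline{\overline{\mathcal{N}}}(f)$ on $(a_{i},b_{j})$ through (\ref{vgl11}) together with the continuous (in fact, real-analytic) dependence of invariant manifolds of hyperbolic saddles on parameters. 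Once Step 3 is settled, density is immediate, completing the genericity statement.
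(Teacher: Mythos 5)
Your overall architecture (Peixoto--Baggis for the ``if'' direction, density plus conjugacy invariants for the ``only if'', openness deduced from part 1 via Corollary \ref{C5.4} and Lemma \ref{L5.1}) matches the paper's. The two genuine gaps are both in the density argument, which is where all the work lies.

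First, Step 2. You assert that the locus where $f$ has a multiple critical point is a \emph{proper} real-analytic subvariety of the parameter space, ``hence generic parameters yield only simple critical points.'' Properness is exactly the point at issue: since the vector fields are constrained to the finite-dimensional family (\ref{vgl11}), you must exhibit (arbitrarily close to the given $(\check a,\check b)$) at least one parameter value whose critical points are all simple, otherwise the ``discriminant'' could vanish identically on the family. The paper devotes Lemma \ref{L5.7} to precisely this, and flags it as by far the hardest part of the density proof: writing $\hat w'=-(\log {\bf f})'$, it computes $\partial\hat w'/\partial a_i=-n_i(\wp(z-a_i)-\wp(z-b_B))$ via the Weierstrass addition theorem, shows that at a critical point at most one of the partials $\partial\hat w'/\partial a_i,\ \partial\hat w'/\partial b_j$ can vanish (so for $K>2$ some partial is nonzero), and then applies the implicit function theorem to parametrize the critical set $\Sigma$ by $z$ and move along it from the degenerate stratum $\Sigma_d$ into $\Sigma_{nd}$. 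Without this (or an equivalent nondegeneracy computation) your Step 2 is an unsupported assertion, not a proof.

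Second, Step 3, which you acknowledge is unresolved. The transversality-of-separatrices program you sketch is not how the paper proceeds, and it is genuinely problematic in a constrained finite-dimensional family (global separatrices depend only continuously on parameters, and one must break finitely many connections simultaneously without creating new ones). The paper's device is elementary: add a small constant $c$ to $f$. This leaves $f'$, hence the critical points, unchanged; for small $c$ it preserves simplicity of zeros and poles (conformality of $f$ at its simple zeros, plus the duality (\ref{vgl6}) at poles) and stays in the $\tau_0$-neighbourhood; and it translates each critical value $f(z_i)$ to $f(z_i)+c$. Since $\arg f$ is constant along trajectories by (\ref{vgl4})--(\ref{vgl5}), a saddle connection forces two critical values onto a common ray through $0$, so a generic choice of $c$ (one for which no line through two critical values passes through the origin) destroys all saddle connections at once. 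Incidentally, in your openness argument the clause ``every element there has a Newton flow conjugate to $\overline{\overline{\mathcal{N}}}(f)$'' does not follow from $C^1$-openness; what follows is that every such element has a $C^1$-structurally stable flow, and one then needs the ``only if'' part of Assertion 1 to conclude membership in $\tilde E_r$.
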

\begin{proof}
Will be postponed until the end of this section.   
\end{proof}
We choose another lattice, say $\Lambda^{*}$. The functions $f$ and $g $ in 
$E_{r}(\Lambda)$ 
determine respectively, 
functions $f^{*}$ and $ g^{*} $ in $E_{r}(\Lambda^{*})$, compare Lemma \ref{L4.4}. 
It is easily verified:
\begin{itemize}
\item 
$\overline{\overline{\mathcal{N}}} (f) \sim \overline{\overline{\mathcal{N}}} (g)$ if and only if  $\overline{\overline{\mathcal{N}}} (f^{*}) \sim \overline{\overline{\mathcal{N}}} (g^{*})$
\item 
$\overline{\overline{\mathcal{N}}}(f) \;$ is structurally stable if and only if  $\; \overline{\overline{\mathcal{N}}}(f^{*})$ is structurally stable.
\item $f$ is non-degenerate  
if and only if $f^{*} $ is non-degenerate.
\end{itemize}
\noindent
As an intermezzo, we look at (elliptic) Newton flows from a slightly different point of view. This enables us to perform certain calculations inserting more specific properties of elliptic functions.\\

\noindent
\underline{Steady streams}\\ 
We consider
a {\it stream} on $\mathbb{C}$ (cf. \cite{M1}) with complex potential
\begin{equation}
\label{vgl14}
w(z)=-\log f(z).
\end{equation}
The {\it stream lines} are given by the lines ${\rm arg }f(z)=$ constant, and the {\it velocity field} of this stream by $\overline{w^{'}(z)}$.
Zeros  and poles for $f$ of order $n$ resp. $m$ 
, are just the {\it sinks} and {\it sources} of strength $n$, resp. $m$. Moreover, it is easily verified that the so called {\it stagnation points} of the steady stream (i.e., the zeros for $\overline{w^{'}(z)}$) are the critical points of the planar Newton flow 
$\mathcal{N}(f)$. Altogether, we may conclude that the velocity field of the steady stream given by $w(z)$ and the (desingularized) planar Newton flow $\mathcal{N}(f)$ exhibit equal phase portraits.
\\ \\
From now on, we assume that $f$ has -on a period parallelogram $P $
- the points $({\bf a_{1}}, \! \cdots \!, {\bf a_{A}})$ and $({\bf b_{1}}, \! \cdots \!, {\bf b_{B}})$ as zeros, resp. poles, with multiplicities $n_{1}, \! \cdots \!, n_{A}$, resp. $m_{1}, \! \cdots \!, m_{B}$ We even may assume\footnote{If this is not the case, an (arbitrary small) shift of $P$ along its diagonal, is always possible such that the resulting parallelogram satisfies our assumption (cf. Fig. \ref{Figure11} and \cite{M2}). }  that all these zeros and poles are situated inside $P$ (not on its boundary), cf. Fig. \ref{Figure11}.

\begin{figure}[h]
\begin{center}
\includegraphics[scale=0.88]{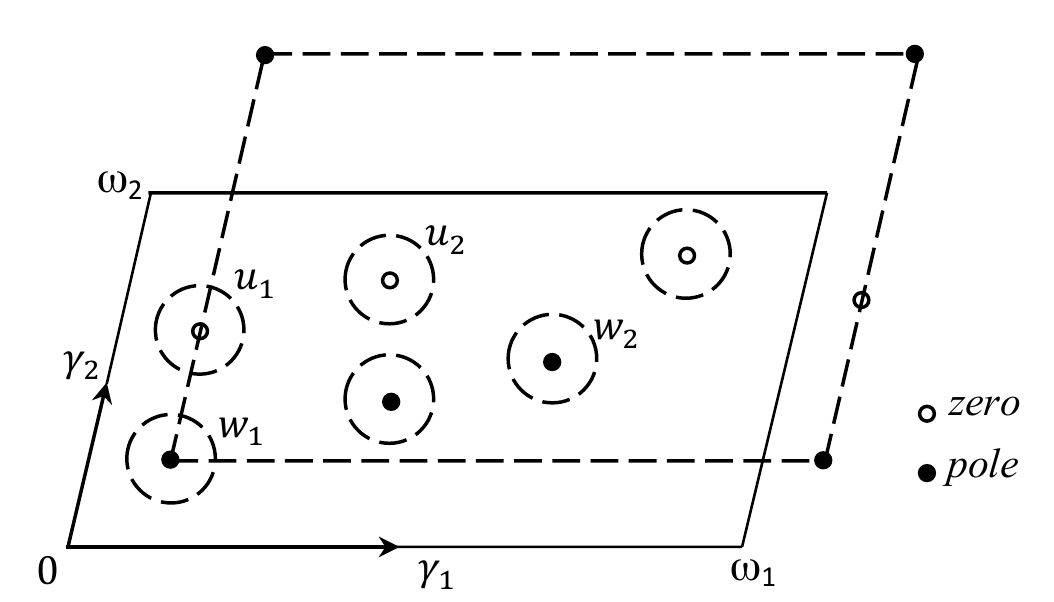}
\caption{\label{Figure11} All zeros and poles for $f$ inside $P$
; after shift}
\end{center}
\end{figure}

Since $f$ is elliptic of order $r$, we have:
\begin{equation}
\label{vgl15}
\begin{cases}
&n_{1} + \! \cdots \! + n_{A}=m_{1} + \! \cdots \! + m_{B}=r\\
& n_{1} {\bf a_{1}} + \! \cdots \! + n_{A} {\bf a_{A}}=m_{1}{\bf b_{1}} + \! \cdots \! + m_{B}{\bf b_{B}} \text{ mod } \Lambda 
 \text{, i.e.}\\
&{\bf b_{B}}=\frac{1}{m_{B}}[ n_{1} {\bf a_{1}} + \! \cdots \! + n_{A} {\bf a_{A}}-m_{1}{\bf b_{1}} - \! \cdots \! - m_{B-1}{\bf b_{B-1}} + \lambda^{0}], 
\text{ some }
\lambda^{0} \in \Lambda.
\end{cases}   
\end{equation} 
Note that there is an explicit formula for $\lambda^{0}$. In fact, we have:  
$$\lambda^{0}= -\eta(f(\gamma_{2}))\omega_{1}+ \eta(f(\gamma_{1}))\omega_{2},$$ where $\omega_{1}(=1)$ and  $\omega_{2}(= i)$ are reduced periods for $\Lambda$, and $\eta(\cdot)$ stands for winding numbers of the curves $f(\gamma_{1})$ and $f(\gamma_{2})$ (compare Fig. \ref{Figure11} and \cite{M2}).                                                                                                            

The derivative $f^{'}$ is an elliptic function of order $(m_{1}+1)+ \! \cdots \! +(m_{B}+1)=r+B$, and
$$ 
\sharp \text{(crit. points for } f)=r+B-(n_{1}-1)- \! \cdots \! -(n_{A}-1)=A+B(=:K)
$$
By (\ref{vgl11}) we have:
\begin{align}
\notag 
&f(z)= \frac{\sigma^{n_{1}}(z - {\bf a_{1}})\! \cdots \! \sigma^{n_{A}} (z - {\bf a_{A}})} {\sigma^{m_{1}}(z - {\bf b_{1}} ) \! \cdots \! \sigma^{m_{B}-1}(z - {\bf b_{B}} )  \sigma (z - {\bf b^{'}_{B}})}, \text{ with }\\ \label{vgl16}
&{\bf b^{'}_{B}}=n_{1} {\bf a_{1}} + \! \cdots \! + n_{A} {\bf a_{A}}-m_{1}{\bf b_{1}} - \! \cdots \! - (m_{B}-1){\bf b_{B}}, \\ \notag
& \text{ and thus }{\bf b^{'}_{B}}={\bf b_{B}} \text{ mod} \; \Lambda.
\end{align}
So, by (\ref{vgl14}) 
\begin{align*}
w'(z)=&-n_{1} \frac{\sigma'(z - {\bf a_{1}})}{\sigma(z - {\bf a_{1}})}- \! \cdots \! -n_{A} \frac{\sigma'(z - {\bf a_{A}})}{\sigma(z - {\bf a_{A}})}+m_{1} \frac{\sigma'(z - {\bf b_{1}})}{\sigma(z - {\bf b_{1}})} +\! \cdots \!\\
&\;+(m_{B}\!-\!1) \frac{\sigma'(z \!-\! {\bf b_{B}})}{\sigma(z \!-\! {\bf b_{B}})} +\frac{\sigma'(z \!-\! {\bf b^{'}_{B}})}{\sigma(z \!-\! {\bf b^{'}_{B}})}\\
=&-n_{1}\zeta(z \!-\! {\bf a_{1}}) \! \cdots \! -n_{A}\zeta(z \!-\! {\bf a_{A}})+m_{1}\zeta(z \!-\! {\bf b_{1}}) \! \cdots \! + (m_{B}\!-\!1)\zeta(z\! -\! {\bf b_{B}})+\zeta(z \!- \!{\bf b^{'}_{B}})
\end{align*}
where $\zeta$ stands for the WeierstrassÕ zeta function (cf. \cite{M2}) associated with the lattice $\Lambda$.
Since $\zeta$ is a quasi-periodic, meromorphic function with only poles (all simple!) in the points of  the lattice $\Lambda$, the function $w'(z)$ is elliptic of order $K(=A+B)$ with  $K$ (simple) poles given by: ${\bf a_{1}}, \! \cdots \! , {\bf a_{A}}, {\bf b_{1}}, \! \cdots \! , {\bf b_{B}}$
, situated in its period parallelogram $P$. It follows that $w'(z)$ has also $K$ zeros (counted by multiplicity) on $P$. These zeros correspond with the critical points for $\overline{\overline{\mathcal{N}}} (f)$ on the torus $T$.

Since $\zeta'\!=\!-\wp$, where $\wp$ stands for the (elliptic!) Weierstrass $\wp$-function (cf.\;\cite{M2}) associated with $\Lambda$
, we find 
(use also ${\bf b^{'}_{B}}={\bf b_{B}} \text{ mod }\Lambda$):
\begin{equation}
\label{vgl17}
w''(z)=n_{1}\wp(z \!-\! {\bf a_{1}}) \! \cdots \! +n_{A}\wp(z \!-\! {\bf a_{A}})-m_{1}\wp(z\! -\! {\bf b_{1}}) \! \cdots \! -m_{B}\wp(z \!-\! {\bf b_{B}})
\end{equation}
{}From (\ref{vgl15}) and (\ref{vgl16}) it follows: ${\bf b_{B}}$ is determined by ${\bf a_{1}}, \! \cdots \! , {\bf a_{A}}, {\bf b_{1}}, \! \cdots \! , {\bf b_{B-1}}$. In mutually disjoint and suitably small\footnote{Choose these neighborhoods so that they are contained in the period parallelogram $P$, cf Fig. \ref{Figure11}.}  neighborhoods, say $U_{1}, \! \cdots \! , U_{A},$ and $W_{1}, \! \cdots \! , W_{B-1}$, of respectively ${\bf a_{1}}, \! \cdots \! , {\bf a_{A}}, {\bf b_{1}}, \! \cdots \! , {\bf b_{B-1}}$ we choose arbitrary points $a_{1}, \! \cdots \!, a_{A}, b_{1}, \! \cdots \!,  b_{B-1}$ and put, with fixed values $n_{i}, m_{j}$ : 
\begin{equation}
\label{vglbB}
b_{B}=\frac{1}{m_{B}}[n_{1}a_{1}+ \! \cdots \! + n_{A}a_{A} - m_{1}b_{1}- \! \cdots \! -m_{B-1}b_{ B-1}+ \lambda^{0}]. 
\end{equation}
In this way $a_{1}, \! \cdots \!, a_{A}, b_{1}, \! \cdots \!,  b_{B-1}$ are close to respectively ${\bf a_{1}}, \! \cdots \! , {\bf a_{A}}, {\bf b_{1}}, \! \cdots \! , {\bf b_{B-1}}$ and $b_{B}$ is close to ${\bf b_{B}}$. Finally, we put 
\begin{equation}
\label{vglbBacc}
b^{'}_{B}=n_{1}a_{1}+ \! \cdots \! + n_{A}a_{A} - m_{1}b_{1} \! \cdots \! -m_{B-1}b_{ B-1}-(m_{B}-1)b_{B}\;\;\; \text{ (close to ${\bf b^{'}_{B}}$).}
\end{equation}
Perturbating ${\bf a_{1}}, \! \cdots \! , {\bf a_{A}}, {\bf b_{1}}, \! \cdots \! , {\bf b_{B-1}}$  into respectively $a_{1}, \! \cdots \!, a_{A}, b_{1}, \! \cdots \!,  b_{B-1}$ , and putting 
 $\check{a}=(a_{1}, \! \cdots \!, a_{A}), \check{b}=(b_{1}, \! \cdots \!,  b_{B-1})$
, we consider  functions\footnote{Note that, when perturbing (${\bf a,b}$) in the indicated way, the winding numbers $\eta({\bf f}_{(\cdot, a, b)}(\gamma_{1}))$, $\eta ({\bf f}_{(\cdot, a, b)} (\gamma_{2}))$, and thus also $\lambda^{0}$, remain unchanged.} 
${\bf f}(z; \check{a},\check{b})$ on the product space $\mathbb{C} \times U_{1} \times \! \cdots \! U_{A} \times W_{1} \times \! \cdots \! W_{B-1}$
given by: (compare (\ref{vgl16})) 
\begin{equation}
\label{vgl18}
{\bf f}(z; \check{a},\check{b})=\frac{\sigma^{n_{1}}(z - a_{1})\! \cdots \! \sigma^{n_{A}} (z - a_{A})} {\sigma^{m_{1}}(z - b_{1}) \! \cdots \! \sigma^{m_{B}-1}(z - b_{B}(\check{a},\check{b}) )  \sigma (z - b^{'}_{B}(\check{a},\check{b}))}. 
\end{equation}
Then, for each $(\check{a},\check{b})$ in $U_{1} \times \! \cdots \! U_{A} \times W_{1} \times \! \cdots \! W_{B-1}$
, the function $
f|_{\check{a},\check{b}}(\cdot)={\bf f}(\cdot ;\check{a},\check{b})$
is elliptic in $z$, of order $r$.
The points $a_{1}, \! \cdots \!, a_{A}$, resp. $b_{1}, \! \cdots \!,  b_{B}$  are  the zeros and poles for $ f|_{\check{a},\check{b}}$ on $P$ ( of multiplicity $n_{1}, \! \cdots \!,  n_{A},$ resp. $m_{1}, \! \cdots \! , m_{B})$. Moreover, $f|_{\check{a},\check{b}}$ has $K(=A+B)$ critical points on $P$ (counted by multiplicity). 
Note that the Newton flow $\overline{\overline{\mathcal{N}}} (f|_{\check{a},\check{b}})$ on $T$ is represented 
by the pair $(\check{a},\check{b})$ in $U_{1} \times \! \cdots \! U_{A} \times W_{1} \times \! \cdots \! W_{B-1}$, i.e. by
$$
\begin{matrix}
a_{1},& \!\! \cdots \! ,& a_{A};& b_{1},& \!\! \cdots \! ,& b_{B-1},&\text{ arbitrarily chosen in suitably small}\\
\uparrow & &\uparrow &\uparrow & & \uparrow& \text{neighbourhoods $U_{1} \times \! \cdots \! U_{A} \times W_{1} \times \! \cdots \! W_{B-1}$}
\\
1 \times& &1 \times & 1 \times & &1 \times  &
\end{matrix}
$$
but also 
by the pair $(a, b)$ in the quotient space $V_{r}(\Lambda)/ \approx$  as introduced in Section \ref{sec4}, i.e. by
$$
\begin{matrix}
(
([a_{1}], &\!\! \cdots \! ,&[a_{A}] )
,&
([b_{1}], &\!\! \cdots \! ,&[b_{B}] )
), & \text{ that fulfil condition (\ref{vgl10})}\\
\uparrow & & \uparrow&\uparrow & &\uparrow &\\
n_{1} \times & &n_{A} \times & m_{1} \times& & m_{B} \times&
\end{matrix}
$$
In particular, 
\begin{itemize}
\item 
If  $(\check{a},\check{b})=(\check{{\bf a}},\check{{\bf b}})$, then  $f|_{\check{{\bf a}},\check{{\bf b}}}(z)=f(z)$ and thus $\overline{\overline{\mathcal{N}}} (f|_{\check{{\bf a}},\check{{\bf b}}})=\overline{\overline{\mathcal{N}}} (f)$;
\item
If $A=B=r$ (thus $K=2r$), i.e., all zeros, poles are simple, then 
\begin{align*}
&(\check{a},\check{b})=(a_{1}, \! \cdots \!, a_{r}; b_{1}, \! \cdots \! , b_{r-1}) \in U_{1} \times \! \cdots \! U_{r} \times W_{1} \times \! \cdots \! W_{r-1}, \text{ and}\\
&(a,b)=((
[a_{1}], \! \cdots \! ,[a_{r}] 
), 
([b_{1}], \! \cdots \! ,[b_{r}] )
)+ \text{ condition }
(\ref{vgl10})
\end{align*}
\end{itemize}
The decomposition of multiple critical points is by far the hardest part in the density proof of
Theorem 5.6 and strongly relies on the following lemma. Note that in case of {\it non-degenerate} functions this lemma is trivial (see the note on Page 13 between Definition 5.5 and Theorem 5.6). But we also we have to cope with {\it degenerate} functions $f$. So, we need an analysis of the influence of perturbations of the zeros/poles for such functions on their critical points (being implicitly determined by these zeros/poles). Here is the lemma we need:
\begin{lemma}
\label{L5.7}
Let $K(=A+B)>2$. Then:\\
\noindent
Under suitably chosen -but arbitrarily small- perturbations of the zeros and poles for $f$, thereby preserving the multiplicities of these zeros and poles, the Newton flow $\overline{\overline{\mathcal{N}}} (f))$ turns into a flow $\overline{\overline{\mathcal{N}}} (f|_{\check{a},\check{b}})$ with only ($K$ different)1-fold saddles.
\end{lemma}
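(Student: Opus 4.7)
The plan is to view the $K$ critical points of $f|_{\check{a},\check{b}}$ (counted with multiplicity) on the torus as the $K$ zeros of the order-$K$ elliptic function $w'(\,\cdot\,;\check{a},\check{b})=-(f|_{\check{a},\check{b}})'/(f|_{\check{a},\check{b}})$ from (\ref{vgl16}), and to show by a parametric transversality argument that for parameters $(\check{a},\check{b})$ arbitrarily close to $(\check{\mathbf{a}},\check{\mathbf{b}})$ these $K$ zeros are distinct and simple. A simple zero of $w'$ at a point where $f\neq 0$ corresponds to a $1$-fold saddle of $\overline{\overline{\mathcal{N}}}(f|_{\check{a},\check{b}})$, cf.\ Fig.~\ref{Figure2}(c) and (\ref{vgl5}); hence this suffices, as the parametrization (\ref{vgl18}) preserves the multiplicities $n_i,m_j$ by construction.

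First I would consider the holomorphic map $\Psi:(z,\check{a},\check{b})\mapsto w'(z;\check{a},\check{b})$ on (an open part of) $T\times U_1\times\cdots\times W_{B-1}$. Using $\zeta'=-\wp$ and the chain rule applied to the constraint (\ref{vglbB}), one computes
\[
\frac{\partial w'}{\partial a_i}=n_i\bigl[\wp(z-b_B)-\wp(z-a_i)\bigr],\qquad \frac{\partial w'}{\partial b_j}=m_j\bigl[\wp(z-b_j)-\wp(z-b_B)\bigr],
\]
while $\partial\Psi/\partial z=w''(z)$. The key claim is that $0$ is a regular value of $\Psi$. Since $\wp$ is even and two-to-one on a period parallelogram, the relation $\wp(z_0-u)=\wp(z_0-v)$ with $u\not\equiv v\ \mathrm{mod}\ \Lambda$ forces $u+v\equiv 2z_0\ \mathrm{mod}\ \Lambda$. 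Simultaneous vanishing of every parameter partial at a zero $(z_0,\check{a},\check{b})$ of $\Psi$ would therefore force all $a_i$'s to coincide and all $b_j$ with $j<B$ to coincide, contradicting (\ref{vgl9}) unless $A\leqslant 1$ and $B\leqslant 2$. In the remaining boundary cases $(A,B)\in\{(1,2),(2,1)\}$, simultaneous vanishing of the two available parameter partials would yield $a_1=b_1\ \mathrm{mod}\ \Lambda$, again violating (\ref{vgl9}). Thus $D\Psi$ has full complex rank $1$ at every zero of $\Psi$, so $0$ is a regular value.

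Consequently $\Psi^{-1}(0)$ is a smooth complex hypersurface of complex dimension $K-1$, and the projection $\pi$ onto the parameter neighbourhood $U_1\times\cdots\times W_{B-1}$ (also of complex dimension $K-1$) is a proper holomorphic map. Since $w'(\,\cdot\,;\check{a},\check{b})$ is a non-trivial elliptic function for every fixed parameter, the $z$-direction cannot be tangent to $\Psi^{-1}(0)$ everywhere, so $\pi$ is a submersion somewhere; by Sard's theorem (analytic version) together with Remmert's proper mapping theorem, the discriminant $\Sigma$ — the set of parameters for which $w'(\,\cdot\,;\check{a},\check{b})$ has a zero of multiplicity $\geqslant 2$ — is a proper analytic subvariety, hence closed and nowhere dense. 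Choosing $(\check{a},\check{b})$ arbitrarily close to $(\check{\mathbf{a}},\check{\mathbf{b}})$ in the complement of $\Sigma$ produces the desired $K$ distinct $1$-fold saddles. The hardest part is the regular-value verification in the boundary cases $(A,B)\in\{(1,2),(2,1)\}$: only two complex parameters are available and one must exploit distinctness of zeros from poles (not just distinctness within each of the two lists). This is precisely where the hypothesis $K>2$ is essential — for $K=A+B=2$ (so $A=B=1$) the single available complex parameter merely translates the whole configuration and cannot split a multiple critical point.
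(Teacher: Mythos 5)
Your proposal is correct, and its heart --- the computation of $\partial w'/\partial a_i$ and $\partial w'/\partial b_j$ via $\zeta'=-\wp$ and the constraint (\ref{vglbB}), followed by the addition-theorem argument that at most one of these parameter partials can vanish at a point of the critical set, so that $K>2$ guarantees a non-vanishing one --- is exactly the paper's key step (its relation (\ref{vgl19})). The two arguments differ only in the endgame. The paper stays elementary: with, say, $\partial\hat w'/\partial a_1\neq 0$ at a degenerate critical point $\mathbf{z}_1$, it applies the implicit function theorem to parametrize the critical set as $a_1=a_1(z,a_2,\dots,b_{B-1})$, differentiates the identity $\hat w'\equiv 0$ along this graph to get $\hat w''+(\partial\hat w'/\partial a_1)(\partial a_1/\partial z)=0$, notes that the holomorphic function $\partial a_1/\partial z$ has isolated zeros, and thus exhibits an explicit one-parameter perturbation (move $z$ slightly off $\mathbf{z}_1$ and adjust $a_1$ accordingly) landing in the non-degenerate stratum; this is iterated over the multiple saddles $\mathbf{z}_1,\dots,\mathbf{z}_L$. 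You package the same transversality fact as ``$0$ is a regular value of $\Psi$'' and invoke parametric Sard (plus properness and Remmert) to kill all multiple saddles at once. Both finishes are sound; yours is shorter but leans on heavier machinery, while the paper's is self-contained and makes the admissible perturbation explicit. One small slip: in the boundary case $(A,B)=(2,1)$ the two available partials are $\partial w'/\partial a_1$ and $\partial w'/\partial a_2$, and their simultaneous vanishing forces $a_1\equiv a_2 \bmod \Lambda$ (not $a_1\equiv b_1$), contradicting distinctness of the zeros rather than (\ref{vgl9}); the conclusion is unaffected.
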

\begin{proof}
We consider $\hat{w}(z; \check{a}, \check{b})= - {\rm log} {\bf f} (z; \check{a}, \check{b})$ and write: 
$$
\frac{\partial \hat{w}}{\partial z}=\hat{w}^{'}(z; \check{a}, \check{b});\, \,
\frac{\partial^{2} \hat{w}}{\partial^{2} z}=\hat{w}^{''}(z; \check{a}, \check{b}).
$$
These are meromorphic functions in each of the variables $z, a_{1}, \! \cdots \!, a_{A}, b_{1}, \! \cdots \!,  b_{B-1}$.\\
Define:
\begin{align*}
&\Sigma=\{ (z; \check{a}, \check{b}) \mid \hat{w}^{'}(z; \check{a}, \check{b})=0\} &\text{``critical set''}\\
& \Sigma_{nd}=\{ (z; \check{a}, \check{b}) \mid \hat{w}^{'}(z; \check{a}, \check{b})=0,\hat{w}^{''}(z; \check{a}, \check{b}) \neq 0\}&\text{``non-degenerate critical set''}\\
& \Sigma_{d}=\{ (z; \check{a}, \check{b}) \mid \hat{w}^{'}(z; \check{a}, \check{b})=0, \hat{w}^{''}(z; \check{a}, \check{b})=0\}&\text{``degenerate critical set''}\\
\end{align*}
Since the $a_{i}, i=1, \! \cdots \! , A,$ and $b_{j}, j=1, \! \cdots \! ,B$, are poles for $\hat{w}^{'}(\cdot; \check{a}, \check{b})$ as an elliptic function in $z$, we have: If $(z; \check{a}, \check{b}) \in \Sigma$, then:
\begin{equation*}
\begin{cases}
z \neq a_{i} \text{ mod }\Lambda, z \neq b_{j} \text{ mod }\Lambda, z \neq b^{'}_{B} \text{ mod }\Lambda, \text{ and (by construction)}\\
a_{i_{1}} \neq a_{i_{2}} \text{ mod }\Lambda, i_{1} \neq i_{2}, i_{1}, i_{2}=1, \! \cdots \! , A;b_{j_{1}} \neq b_{j_{2}} \text{ mod }\Lambda, j_{1} \neq j_{2}, j_{1}, j_{2}=1, \! \cdots \! , B\!-\!1.
\end{cases}
\end{equation*}
The subset $\mathcal{V}$ of elements $(z; \check{a}, \check{b}) \in  \mathbb{C} \times U_{1} \times \! \cdots \! U_{A} \times W_{1} \times \! \cdots \! W_{B-1}$ that fulfills these inequalities is open in $\mathbb{C} \times\mathbb{C}^{A} \times \mathbb{C}^{B-1}$. On this set $\mathcal{V}$ (that contains the critical set $\Sigma$), the function $\hat{w}^{'}$ is analytic in each of its variables. (Thus $\Sigma$ is a closed subset of $\mathcal{V}$).
For the partial derivatives of $\hat{w}^{'}$ on $\mathcal{V}$ we find: (use (\ref{vgl14}), (\ref{vgl18}), compare also (\ref{vgl17}) and Footnote 9)
\begin{align*}
\frac{\partial \hat{w}^{'}}{\partial z}:\;&n_{1} \wp (z-a_{1}) + \! \cdots \! +n_{A} \wp (z-a_{A})-m_{1} \wp (z-b_{1})- \! \cdots \! -m_{B} \wp (z-b_{B})\\
\frac{\partial \hat{w}^{'}}{\partial a_{i}}:\;&\frac{\partial}{\partial a_{i}}[-n_{1} \zeta (z\!-\!a_{1}) \! \cdots \! -n_{A} \zeta (z\!-\!a_{A})+m_{1} \zeta (z-b_{1}) \! \cdots \! +(m_{B}-1) \zeta (z-b_{B}) + \zeta(z-b^{'}_{B})]\\
 &=[ \text{ using the formulas (\ref{vglbB}) and (\ref{vglbBacc}})] \\
 &=[ -n_{i} \wp (z-a_{i}) +(m_{B}-1)[\frac{n_{i}}{m_{B}}] \wp (z-b_{B})+(n_{i}-(m_{B}-1)[\frac{n_{i}}{m_{B}}]) \wp (z-b^{'}_{B})]\\
 &=-n_{i}(\wp (z-a_{i}) -\wp (z-b_{B}) ) \;\;\;\;\;\; \;\;\; (i=1, \! \cdots \! , A)\\
\text{In a }& \text{similar way:}\\
\frac{\partial \hat{w}^{'}}{\partial b_{j}}: \;& m_{j}(\wp (z-b_{i}) -\wp (z-b_{B}) ) \;\;\;(j=1, \! \cdots \! , B-1)\\
[\frac{\partial \hat{w}^{'}}{\partial b_{B}}&=0 \;\;\;(m_{B}=1, 2, \! \cdots \! )]
\end{align*}
By the Addition Theorem of the $\wp$-function [cf. \cite{M2}], we have:
$$
\frac{\partial \hat{w}^{'}}{\partial a_{i}}: - \frac{\sigma(a_{i}\!-\!b_{B})\sigma(2z\!-\!a_{i}-b_{B})}{\sigma^{2}(z\!-\!a_{i})\sigma^{2}(z\!-\!b_{B})}, i=1, \! \cdots \! , A
$$
So, let $(z; \check{a}, \check{b}) $ in $\mathcal{V}$, then
\begin{equation*}
\frac{\partial \hat{w}^{'}}{\partial a_{i}}|_{(z; \check{a}, \check{b})}=0
, \text{ some }i \in \{1, \! \cdots \! , A \} 
\Leftrightarrow
\end{equation*}
$$
\begin{cases}
a_{i}=b_{B}\text{ mod }\Lambda,& \text{[in contradiction with ``$a_{i}, b_{B}$ different'']}\\
\text{ or }&\\
2z=a_{i}+b_{B}\text{ mod }\Lambda & \text{[ if $2z=a_{i_{1}}+b_{B}\text{ mod }\Lambda$, $2z=a_{i_{2}}+b_{B}\text{ mod }\Lambda,i_{1} \neq i_{2},$}\\
&\text{then $a_{i_{1}}=a_{i_{2}}$; in contradiction with ``$a_{i_{1}}, a_{i_{2}}$ different'']}
\end{cases}
$$
From this follows:
\\ \\
\noindent
If $(z; \check{a}, \check{b}) \in \mathcal{V}$ , then at most one of  $\frac{\partial \hat{w}^{'}}{\partial a_{i}}|_{(z; \check{a}, \check{b})},i =1, \! \cdots \! , A,  $ vanishes. 
By a similar reasoning, we even may conclude:
\begin{equation}
\label{vgl19}
\begin{cases}
\text{At most one of the partial derivatives} \\
$$
\frac{\partial \hat{w}^{'}}{\partial a_{i}}(z; \check{a}, \check{b}),\frac{\partial \hat{w}^{'}}{\partial b_{j}}(z; \check{a}, \check{b}), (z; \check{a}, \check{b}) \in \mathcal{V}, i =1, \! \cdots \! , A,  j=1, \! \cdots \! , B-1,
$$
\\
\text{vanishes, and thus, {\bf in case ${\bf K>2}$:} }\\
$$
\frac{\partial \hat{w}^{'}}{\partial a_{i}}(z; \check{a}, \check{b}) \neq 0, \frac{\partial \hat{w}^{'}}{\partial b_{j}}(z; \check{a}, \check{b}) \neq 0, \\
\text{ for at least one $i \in \{1, \! \cdots \! , A \}$ or $j \in \{1, \! \cdots \! , B-1. \}$}
$$
\end{cases}
\end{equation}
The latter conclusion cannot be drawn in case $K=2$; however, see the forthcoming Remark \ref{R5.8}. Note that always $K\geqslant2$. 
\\ \\                                                                                                
\noindent
Under the assumption that $K >2: $
let ${\bf z_{1}},{\bf z_{2}}, \! \cdots \! , {\bf z_{L}}$
be the different critical points for $f=({\bf f}(\cdot, \check{{\bf a}}, \check{{\bf b}}))$ 
with multiplicities $K_{1}, \! \cdots \! , K_{L}, K_{1} \geqslant \! \cdots \! \geqslant K_{L} \geqslant 1, K_{1}+ \! \cdots \! +K_{L}=K$.
If $(\check{a}, \check{b})$ tends to $({\bf \check{a}}, {\bf \check{b}})$, then $K_{l}$ of the $K$ critical points for ${\bf f}(\cdot, \check{a}, \check{b})$(counted by multiplicity) tend to the $K_{l}$-fold saddle ${\bf z_{l}}$ for $\overline{\overline{\mathcal{N}}} (f))$. It follows that, if $(\check{a}, \check{b})$ is sufficiently  close to $({\bf \check{a}}, {\bf \check{b}})$, then $K_{l}$ critical points for ${\bf f}(\cdot, \check{a}, \check{b})$(counted by multiplicity) are situated  in, suitably small, disjunct neighborhoods, say $V_{l}$, around ${\bf z_{l}}, l=1, \! \cdots \! ,L.$
We choose $(\check{a}, \check{b})$ so close to $({\bf \check{a}}, {\bf \check{b}})$  that this condition holds. 
If all the critical points for $f$, i.e. the saddles of $\overline{\overline{\mathcal{N}}} (f))$, are simple, there is nothing to prove. 
So, let $K_{1}>1$, thus $\hat{w}^{''}({\bf z_{1}}; {\bf \check{a}}, {\bf \check{b}})=0$, i.e. $({\bf z_{1}}; {\bf \check{a}}, {\bf \check{b}}) \in \Sigma_{d} \subset \Sigma$. Without loss of generality, we  assume (see (\ref{vgl19})) that $\frac{\partial \hat{w}^{'}}{\partial a_{i}}({\bf z_{1}}; {\bf \check{a}}, {\bf \check{b}}) \neq 0$. According to the Implicit Function Theorem a  local parametrization of $\Sigma$ around $({\bf z_{1}}; {\bf \check{a}}, {\bf \check{b}})$ exists, given by:  
$$
(z; a_{1}(z, a_{2}, \! \cdots \! ,a_{A}, b_{1}, \! \cdots \! , b_{B-1}), a_{2}, \! \cdots \! ,a_{A}, b_{1}, \! \cdots \! , b_{B-1}), 
$$
where $a_{1}({\bf z_{1}}, {\bf a_{2}}, \! \cdots \! ,{\bf a_{A}}, {\bf b_{1}}, \! \cdots \! , {\bf b_{B-1}})={\bf a_{1}}.$ Thus, at $({\bf z_{1}}; {\bf \check{a}}, {\bf \check{b}})$ we have: 
$$
\hat{w}^{''}+[\frac{\partial \hat{w}^{'}}{\partial a_{1}}][\frac{\partial a_{1}}{\partial z}(z,a_{2}, \! \cdots \!)]=0.
$$
Since $\hat{w}^{''}({\bf z_{1}}; {\bf \check{a}}, {\bf \check{b}})=0$ and $\frac{\partial \hat{w}^{'}}{\partial a_{i}}({\bf z_{1}}; {\bf \check{a}}, {\bf \check{b}}) \neq 0$, it follows that
$$
\frac{\partial a_{1}}{\partial z}({\bf z_{1}},{\bf \check{a}}, {\bf \check{b}})=0
$$
Note that $a_{1}(z, a_{2}, \! \cdots \! ,a_{A}, b_{1}, \! \cdots \! , b_{B-1})$, depends complex differentiable on $z$. So the zeros for 
$\frac{\partial a_{1}}{\partial z}(z;a_{2}, \! \cdots \!, a_{A}, b_{1}, \! \cdots \! , b_{B-1})$ are {\it isolated}. Thus, on a {\it reduced} neighborhood of $({\bf z_{1}},{\bf \check{a}}, {\bf \check{b}})$, say $\hat{U}$, neither $\frac{\partial a_{1}}{\partial z}(\cdot)$ nor $\frac{\partial \hat{w}^{'}}{\partial a_{1}}$ vanish. If $z$ tends to ${\bf z_{1}}$ , then: 
$$
(z; a_{1}(z, {\bf a_{2}}, \! \cdots \! ,{\bf a_{A}}, {\bf b_{1}}, \! \cdots \! , {\bf b_{B-1}}), {\bf a_{2}}, \! \cdots \! ,{\bf a_{A}}, {\bf b_{1}}, \! \cdots \! , {\bf b_{B-1}})
$$
tends to $({\bf z_{1}},{\bf \check{a}}, {\bf \check{b}})$ along $\Sigma$, and we cross $\hat{U}$, meeting elements $(z, \check{a}, \check{b}) \in \Sigma$, such that
\begin{equation*}
\begin{cases}
&\hat{w}^{''}(z, \check{a}, \check{b})+\frac{\partial \hat{w}^{'}}{\partial a_{1}}(z, \check{a}, \check{b}) \frac{\partial a_{1}}{\partial z} (z, \check{a}, \check{b}) =0\\
&\frac{\partial \hat{w}^{'}}{\partial a_{1}}(z, \check{a}, \check{b}) \neq 0, \frac{\partial a_{1}}{\partial z} (z, \check{a}, \check{b}) \neq 0
\end{cases}
\end{equation*}
Thus,
$$
\hat{w}^{''}(z, \check{a}, \check{b}) \neq 0.
$$
Hence, we have: $(z, \check{a}, \check{b}) \in \Sigma_{nd}$.
So, the $K_{1}$ critical points for ${\bf f}(\cdot ; \check{a}, \check{b})$ that approach ${\bf z_{1}}$ via the ÒcurveÓ 
$$
(z; a_{1}(z, {\bf a_{2}}, \! \cdots \! ,{\bf a_{A}}, {\bf b_{1}}, \! \cdots \! , {\bf b_{B-1}}), {\bf a_{2}}, \! \cdots \! ,{\bf a_{A}}, {\bf b_{1}}, \! \cdots \! , {\bf b_{B-1}})
$$
are all simple, whereas the critical points for ${\bf f}(\cdot, \check{a}, \check{b})$ approaching ${\bf z_{2}}, \! \cdots \! ,{\bf z_{L}}$ are still situated in respectively $V_{2}, \! \cdots \! ,V_{L}$.
If $K_{2}>1$, we repeat the above procedure with respect to ${\bf z_{2}}$, etc.
In finitely many steps we arrive at a flow $\overline{\overline{\mathcal{N}}} ({\bf f}|_{\check{a}, \check{b}})$ with only simple saddles and $( \check{a}, \check{b})$ arbitrary close to $({\bf \check{a}}, {\bf \check{b}})$.
\end{proof}
\begin{remark}
\label{R5.8}
The case $A=B=1 (\text{ i.e. }K=2)$.\\
\noindent
If $K =2$, then the function $f$  has -on $T$- only one zero and one pole, both of order $r$; the corresponding flow $\overline{\overline{\mathcal{N}}} (f))$ is referred to as to a {\it nuclear} 
Newton flow.
In this case, the assertion of Lemma \ref{L5.7} is also true. In fact, even a stronger result holds:  
\begin{align*}
&\text{`` All nuclear Newton flows -of any order $r$- are conjugate
, in particular each of them}\\
&\text{ has precisely two saddles (simple) and there are no saddle connections''.}
\end{align*}
Nuclear Newton flows will play an important role in the creation of elliptic Newton flows, but we postpone the discussion on this subject to a sequel of the present paper, see \cite{HT4}.
\end{remark}
\noindent
We end up by presenting the (already announced) proof of Theorem \ref{T5.6}\\ 

{\it Proof of Theorem \ref{T5.6}: }\\
\noindent
\underline{The ``density part'' of Assertion 2}: Let $\mathcal{O}$ be an arbitrarily small $\tau_{0}$-neighbourhood of  a function $f $ as in Lemma \ref{L5.7}.
We split up each of the $A$ different zeros and $B$ different poles for $f$ into $n_{i}$ resp. $m_{j}$ mutually different points, contained in disjoint neighbourhoods $U_{i}$ resp. $W_{j}, i=1, \! \cdots \! , A, j=1, \! \cdots \! , B$ (compare Fig.\ref{Figure11}) and take into account relation (\ref{vgl9}).
In this way, we obtain 2$r$ different points, giving rise to an elliptic function of the the form (\ref{vgl11}), with these points as the  $r$ simple zeros 
/$r$ simple poles in $P$. We may assume that this function is still 
situated in $\mathcal{O}$, see the introduction of the topology $\tau_{0}$ in Section \ref{sec4}. Now, we apply Lemma \ref{L5.7} (case $ A=B=r, K=2r$) and find in $\mathcal{O}$ an elliptic function, of order $r$ with only simple zeros, poles and critical points. This function is non-degenerate if the corresponding Newton flow does not exhibit trajectories that connect two of its critical points. If this is the case, none of the straight lines connecting two critical values for our function, passes through the point $0 \in \mathbb{C}$. If not, then adding an arbitrarily small constant $c \in \mathbb{C}$ to $f$ does not affect the position of its critical points, and yields a function -still in\footnote{Note that at simple zeros an analytic function is conformal. In case of a pole, use also (\ref{vgl6}).}  $\mathcal{O}$- with only simple zeros and poles. By choosing $c$ suitably, we find a function, renamed $f$,  such that  none of 
the straight lines connecting critical values (for different critical points) contains $0 \in \mathbb{C}$. So, we have: $f \in \tilde{E}_{r}$, i.e., $\tilde{E}_{r}$ is dense in $E_{r}$.\\

\noindent
\underline{The ``if part'' of Assertion 1}: Let $f \in \tilde{E}_{r}$. Then all equilibria for the flow $\overline{\overline{\mathcal{N}}} (f)$ are 
hyperbolic (cf. Remarks \ref{R1.1} and \ref{R3.4}). Moreover, there are neither saddle-connections nor closed orbits (compare (\ref{vgl4})) and the limiting sets of the trajectories are isolated equilibria (cf. Lemma \ref{L3.3}). Now, the Baggis-Peixoto Theorem for structurally stable $C^{1}$-vector fields on compact 2-dimensional manifolds (cf.\cite{Peix1}, \cite{Peix2})
yields that $\overline{\overline{\mathcal{N}}} (f)$ 
is $C^{1}$-structurally stable, and is by Corollary \ref{C5.4} also $\tau_{0}$-structural stable. \\

\noindent
\underline{The ``only if part'' of Assertion 1}: Suppose that $f \notin \tilde{E}_{r}$, but $\overline{\overline{\mathcal{N}}} (f)$ in $\tilde{N}_{r}$. Then there is a $\tau_{0}$-neighbourhood of $f$, say $\mathcal{O}$, such that for all $g \in \mathcal{O}$: $\overline{\overline{\mathcal{N}}} (f) \sim \overline{\overline{\mathcal{N}}} (g)$. Since $\tilde{E}_{r}$ is dense in $E_{r}$(already proved), we may assume that  $g \in \tilde{E}_{r}$. So, $\overline{\overline{\mathcal{N}}} (g)$ has precisely $r$ hyperbolic attractors/repellors and does not admit ``saddle connections''. This must also be true for $\overline{\overline{\mathcal{N}}} (f)$, in contradiction with $f \notin \tilde{E}_{r}$. \\

\noindent
\underline{The ``openess part'' of Assertion 2}: This a direct consequence of the Assertion 1 (which has already been proved).

\end{document}